\newtheorem{theorem}{Theorem}[section]
\newtheorem{lemma}[theorem]{Lemma}
\newtheorem{proposition}[theorem]{Proposition}
\newtheorem{definition}[theorem]{Definition}
\newtheorem{remark}[theorem]{Remark}
\newcommand{\E}{\mathbb{E}}
\newcommand{\N}{\mathbb{N}}
\newcommand{\Var}{\mathrm{Var}}
\newcommand{\Cov}{\mathrm{Cov}}
\numberwithin{equation}{section}
\title{Limit Theorems for $\theta$-expansions and the Failure of the Strong Law}
\author[1]{Andreas Rusu}
\affil[1]{Faculty of Applied Sciences, National University of Science and Technology POLITEHNICA Bucharest, Splaiul Independentei 313, 060042 Bucharest, Romania\\ 
e-mail: \href{mailto: andreasrusu10@gmail.com}{andreasrusu10@gmail.com}}
\author[2,3]{Gabriela Ileana Sebe}
\affil[2]{Faculty of Applied Sciences, National University of Science and Technology POLITEHNICA Bucharest, Splaiul Independentei 313, 060042 Bucharest, Romania }
\affil[3]{Gheorghe Mihoc-Caius Iacob Institute of Mathematical Statistics and Applied Mathematics of the Romanian Academy, Calea 13 Sept. 13, 050711 Bucharest, Romania\\ e-mail: \href{mailto: igsebe@yahoo.com}{igsebe@yahoo.com}}
\author[4]{Dan Lascu}
\affil[4]{Romanian Naval Academy ``Mircea cel Batran", 1 Fulgerului, 900218 Constanta, Romania\\ 
e-mail: \href{mailto: lascudan@gmail.com}{lascudan@gmail.com}}
\date{September 2025}
\begin{document}

\maketitle
\begin{abstract}
The paper presents fundamental metrical theorems for a class of continued fraction-like expansions known as $\theta$-expansions. We first prove Khinchine's Weak Law of Large Numbers for the sum of digits, followed by the Diamond-Vaaler Strong Law for the sum of digits minus the largest one. 
Our main result is a general theorem on the failure of the strong law, showing that no regular norming sequence can yield a finite, non-zero almost sure limit. This result extends a classical theorem of Philipp to the $\theta$-expansion setting. The proofs leverage the system's explicit invariant measure and a detailed analysis of its mixing properties.
\end{abstract}

\noindent\textbf{Keywords:} $\theta$-expansions, limit theorems, weak and strong law of large numbers, $\psi$-mixing

\noindent\textbf{Mathematics Subject Classification (2000):} 11K55, 37A05, 60F05, 60F15

\section{Introduction and Setup}

The metrical theory of continued fractions is a cornerstone of ergodic theory and metric number theory, offering profound insights into the distribution of digits and the behavior of sums involving them. 
Classical results by Khinchine, L\'evy, and others describe the asymptotic growth of the sum of partial quotients.
A central finding is that while the mean of the digits is infinite, a weak law of large numbers holds. 
Furthermore, Diamond and Vaaler \cite{diamondvaaler} showed that the strong law fails for the full sum but holds for the sum minus the largest term, underscoring the outsized influence of a single, rare large digit.

This paper extends this rich theory of regular continued fractions to a class of generalized continued fraction algorithms known as 
$\theta$-expansions. 
These algorithms are generated by a family of piecewise monotonic interval maps $T_{\theta}$, which include the standard Gauss map as a special case. A significant challenge in this generalization is the absence of a closed-form invariant measure for arbitrary $\theta$. 
We overcome this by focusing on a specific class of parameters for which an explicit and finite invariant measure exists.

\subsection{$\theta$-Expansions}

Let $\theta \in (0,1)$ be an irrational number such that $\theta^2 = \frac{1}{m}$ for some $m \in \mathbb{N}_+$. This restriction ensures the existence of a particularly well-behaved invariant measure.
Also, to ensure $\theta$ is irrational we assume $m$ is not a perfect square.

\begin{definition}
The {generalized Gauss map} $T_{\theta}: [0, \theta] \to [0, \theta]$ is defined by
\[
T_{\theta}(x) =
\begin{cases}
\displaystyle \frac{1}{x} - \theta \left\lfloor \frac{1}{\theta x} \right\rfloor & \text{for } x \in (0, \theta], \\
0 & \text{for } x = 0.
\end{cases}
\]
\end{definition}
\noindent 
This map generates a unique {$\theta$-expansion} for any irrational $x \in (0, \theta]$:
\[
x = \cfrac{1}{\theta\ell_1(x) + \cfrac{1}{\theta\ell_2(x) + \cfrac{1}{\theta\ell_3(x) + \ddots}}} := [\ell_1 (x), \ell_2 (x), \ell_3 (x), \ldots]_\theta,
\]
where the digits are given by
\[
\ell_n (x) = \ell_{n, \theta} (x)= \left\lfloor \frac{1}{\theta T_{\theta}^{n-1}(x)} \right\rfloor, \quad n \geq 1,
\]
with $T_{\theta}^0=x$. 
The digits take values only in $\{m,m+1,\dots\}$, i.e., $\ell_n\ge m$ for every $n \geq 1$.

The map $T_\theta$ partitions the interval $[0, \theta]$ into the subintervals
\begin{equation} \label{cylinder}
I_\theta(i) = \left( \frac{1}{\theta(i+1)}, \frac{1}{\theta i} \right],
\end{equation}
on which $T_{\theta}(x) = x^{-1} - i \theta$.

The ergodic and statistical properties of this system are governed by an explicit invariant probability measure.

\begin{definition}[Invariant Measure] \label{inv.measure}
The {$\theta$-Gauss measure} $\gamma_{\theta}$ on $[0, \theta]$ is defined by
\[
\frac{\mathrm{d}\gamma_{\theta}}{\mathrm{d}x}(x) = h_\theta(x) = \frac{1}{\log(1+\theta^2)} \cdot \frac{\theta}{1 + \theta x}.
\]
That is, for any measurable set $A \subset [0, \theta]$,
\[
\gamma_{\theta}(A) = \frac{1}{\log(1+\theta^2)} \int_A \frac{\theta}{1+\theta x}  \mathrm{d} x.
\]
The system $(T_{\theta}, \gamma_{\theta})$ is known to be ergodic \cite{CR-2003}.
\end{definition}

The metrical theory of 
$\theta$-expansions has been developed in several directions. Building on the ergodicity of the system \cite{CR-2003}, Sebe and Lascu established fine convergence rate estimates for the Gauss-Kuzmin problem \cite{SebeLascuCraiova,Sebe2017,SebeLascu2019}. 
Further work has extended these investigations to the extremes of the digit sequence, yielding results in extreme value theory \cite{SebeLascuBilel3}, and to the study of exceptional sets through the lens of Hausdorff dimension \cite{SebeLascuBilel1, SebeLascuBilel2}. The present paper complements this body of work by providing the fundamental limit theorems governing the growth of the digit sum.

For $n \geq 1$, we define the fundamental objects of our study: 
\[
S_{n,\theta}(x) := \displaystyle \sum_{k=1}^n \ell_k (x), \quad 
L_{n,\theta}(x) := \max_{1 \leq k \leq n} {\ell_k(x)}
\]
the sum of the first $n$ digits and the largest digit, respectively.    

The divergence of the mean, $\E_{\gamma_{\theta}}[\ell_1] = \infty$ (as shown in Remark \ref{rem.1.6}), places these objects in the domain of applications of ergodic theory to non-integrable functions. The purpose of this paper is to establish the following limit theorems governing their growth.

\begin{theorem}[Khinchine’s Theorem for $\theta$-Expansions]
\label{thm:khinchine}
Let $\theta \in (0, 1)$ such that $\theta^2 = 1/m$ for some $m \in \N_+$. 
For every $\varepsilon>0$,
\[
\lim_{n\to\infty}
\gamma_\theta\!\left(
\left\{x\in[0,\theta]:
\left|
\frac{S_{n,\theta}(x)}{n\log n}
- \frac{1}{\log(1+\theta^2)}
\right|>\varepsilon
\right\}\right)=0.
\]
\end{theorem}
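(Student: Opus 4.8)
The plan is to treat this as a weak law of large numbers for a stationary sequence of non-integrable, heavy-tailed observables with tail index one, and to run a Feller-type truncation at the level dictated by the norming sequence $b_n := n\log n$. The first step is to extract the exact one-dimensional law of $\ell_1$ from the explicit density $h_\theta$ of Definition \ref{inv.measure}. Integrating $h_\theta$ over the cylinder $I_\theta(i)$ in \eqref{cylinder} yields
\[
\gamma_\theta(\ell_1 = i) = \frac{1}{\log(1+\theta^2)}\log\!\left(1 + \frac{1}{i(i+2)}\right), \qquad \gamma_\theta(\ell_1 \ge i) = \frac{\log(1+1/i)}{\log(1+\theta^2)},
\]
so that $\gamma_\theta(\ell_1 \ge i) \sim C/i$ and $\gamma_\theta(\ell_1 = i)\sim C/i^2$ with $C = 1/\log(1+\theta^2)$. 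This is precisely the tail that produces the $n\log n$ normalization and the constant $C$ in the statement. Since $\ell_k = \ell_1\circ T_\theta^{\,k-1}$ and $\gamma_\theta$ is $T_\theta$-invariant, the sequence $(\ell_k)_{k\ge 1}$ is stationary and each $\ell_k$ shares this law.

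Next I would introduce the truncated digits $\ell_k^{(n)} := \ell_k\,\mathbf{1}_{\{\ell_k \le b_n\}}$ and the truncated sum $T_n := \sum_{k=1}^n \ell_k^{(n)}$, and reduce the theorem to three estimates. First, a union bound together with stationarity controls the truncation error:
\[
\gamma_\theta(S_{n,\theta}\neq T_n) \le \sum_{k=1}^n \gamma_\theta(\ell_k > b_n) = n\,\gamma_\theta(\ell_1 > b_n) \sim \frac{C}{\log n} \longrightarrow 0,
\]
so it suffices to prove $T_n/b_n \to C$ in $\gamma_\theta$-probability. Second, summing the truncated first moment gives
\[
\E_{\gamma_\theta}[T_n] = n\,\E_{\gamma_\theta}\!\left[\ell_1\,\mathbf{1}_{\{\ell_1 \le b_n\}}\right] = n\sum_{i=m}^{\lfloor b_n\rfloor} i\,\gamma_\theta(\ell_1 = i) \sim C\,n\log b_n \sim C\,n\log n,
\]
whence $\E_{\gamma_\theta}[T_n]/b_n \to C$. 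It then remains to show $\Var_{\gamma_\theta}(T_n) = o(b_n^2)$, after which Chebyshev's inequality closes the argument.

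The variance estimate is the heart of the proof and the step I expect to be the main obstacle, since the digits are dependent. Writing $\Var_{\gamma_\theta}(T_n) = n\Var_{\gamma_\theta}(\ell_1^{(n)}) + 2\sum_{1\le j<k\le n}\Cov(\ell_j^{(n)},\ell_k^{(n)})$, the diagonal contribution is handled by the crude bound
\[
\Var_{\gamma_\theta}(\ell_1^{(n)})\le\E_{\gamma_\theta}\!\left[(\ell_1^{(n)})^2\right] = \sum_{i \le b_n} i^2\,\gamma_\theta(\ell_1=i)\sim C\,b_n,
\]
which yields $n\Var_{\gamma_\theta}(\ell_1^{(n)}) = O(n^2\log n) = o(b_n^2)$ precisely because of the extra $\log n$ factor in the norming. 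For the off-diagonal covariances I would invoke the $\psi$-mixing property of $(T_\theta,\gamma_\theta)$: as $\ell_j^{(n)}$ and $\ell_k^{(n)}$ are nonnegative functions of coordinates separated by a gap $k-j$, the multiplicative covariance inequality gives $|\Cov(\ell_j^{(n)},\ell_k^{(n)})|\le \psi(k-j)\,(\E_{\gamma_\theta}[\ell_1^{(n)}])^2$. Summability of the mixing coefficients, $\Psi:=\sum_{d\ge1}\psi(d)<\infty$ (which follows from the exponential Gauss--Kuzmin estimates available for this system), then bounds the off-diagonal sum by $2\Psi\,n\,(\E_{\gamma_\theta}[\ell_1^{(n)}])^2 = O(n\log^2 n) = o(b_n^2)$.

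Assembling these pieces gives $\Var_{\gamma_\theta}(T_n)/b_n^2\to 0$, so Chebyshev's inequality yields $\gamma_\theta(|T_n - \E_{\gamma_\theta}[T_n]| > \tfrac{\varepsilon}{2}b_n)\to 0$; combined with $\E_{\gamma_\theta}[T_n]/b_n\to C$ this shows $T_n/b_n\to C$ in probability, and the truncation estimate transfers the conclusion to $S_{n,\theta}/b_n$, which is the assertion with $C = 1/\log(1+\theta^2)$. The delicate points to verify carefully are the sharpness of the truncated-moment asymptotics, so that the $\log\log n$ corrections arising from truncating at $b_n = n\log n$ rather than at $n$ are genuinely negligible, and the precise form and applicability of the $\psi$-mixing covariance bound to the truncated observables $\ell_k^{(n)}$, whose means grow like $\log n$.
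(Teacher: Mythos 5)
Your proposal is correct and follows essentially the same route as the paper's proof: truncation of the digits at level $n\log n$, a union bound with the exact tail from Lemma \ref{lema.2.1} to dispose of the large digits, the asymptotics $\E[\ell^{(N)}]\sim \log N/\log(1+\theta^2)$ for the truncated mean, and a Chebyshev argument whose variance estimate combines the second-moment bound $\E[(\ell^{(N)})^2]\ll N$ with the $\psi$-mixing covariance inequality and summability of $\psi$. The only (harmless) difference is that you bound covariances by $\psi(k-j)\,\E|X|\,\E|Y| \ll \psi(k-j)(\log n)^2$ rather than by $\psi(k-j)\sqrt{\Var(X)\Var(Y)} \ll \psi(k-j)\, n\log n$ as in Proposition \ref{prop:covariance-bound}, which in fact gives a slightly sharper off-diagonal estimate but leads to the same conclusion.
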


\begin{theorem}[Diamond--Vaaler Theorem for $\theta$-Expansions]
\label{thm:diamondvaaler}
Let $\theta \in (0, 1)$ such that $\theta^2 = 1/m$ for some $m \in \N_+$. 
For $\gamma_\theta$-almost every $x\in[0,\theta]$,
\[
\lim_{n\to\infty}
\frac{S_{n,\theta}(x)-L_{n,\theta}(x)}{n\log n}
= \frac{1}{\log(1+\theta^2)}.
\]
\end{theorem}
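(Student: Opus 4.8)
The plan is to establish matching almost-sure lower and upper bounds for $(S_{n,\theta}-L_{n,\theta})/(n\log n)$, both equal to $C:=1/\log(1+\theta^2)$. Two ingredients drive the argument. First, the explicit density $h_\theta$ gives the exact tail $\gamma_\theta(\ell_1\ge k)=\log(1+1/k)/\log(1+\theta^2)=C/k+O(1/k^2)$, hence the truncated moments $\E_{\gamma_\theta}[\min(\ell_1,T)]=C\log T+O(1)$ and $\E_{\gamma_\theta}[\min(\ell_1,T)^2]\asymp T$. Second, the $\psi$-mixing property of $(\ell_k)$ under $\gamma_\theta$ (a consequence of the Gauss--Kuzmin theory for $T_\theta$) lets me treat the digits as essentially independent for variance estimates, $\Var_{\gamma_\theta}\big(\sum_{k\le n}f(\ell_k)\big)\le C'\,n\,\E_{\gamma_\theta}[f(\ell_1)^2]$ for the truncations $f$ in play, and for the two-large-digit bound $\gamma_\theta(\#\{k\le n:\ell_k>t\}\ge 2)\le C''\,n^2/t^2$. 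Every almost-sure statement will first be proved along a geometric subsequence $n_j=\lfloor(1+\delta)^j\rfloor$, where the Chebyshev probabilities below are summable because $\sum_j(\log n_j)^{-2}<\infty$, and then transferred to all $n$ by monotonicity, at the cost of a factor $1+\delta$ removed by letting $\delta\downarrow 0$.

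For the lower bound I use the elementary inequality $S_{n,\theta}-L_{n,\theta}\ge \sum_{k\le n}\min(\ell_k,n)-n$, valid because deleting the single largest digit removes at most $\min(\ell_{k^*},n)\le n$ from the capped sum. The capped sum has mean $\sim C\,n\log n$ and standard deviation $O(n)$, so $\psi$-mixing together with Chebyshev gives $\sum_{k\le n}\min(\ell_k,n)\ge(C-\varepsilon)n\log n$ along $(n_j)$ almost surely eventually; monotonicity of $\min(\ell_k,n)$ in $n$ then yields $\liminf_n (S_{n,\theta}-L_{n,\theta})/(n\log n)\ge C$. This parallels the truncation used in the proof of Theorem~\ref{thm:khinchine}, but upgraded to an almost-sure statement.

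The upper bound is the crux. With threshold $T_n=n$ I split
\[
S_{n,\theta}-L_{n,\theta}=\underbrace{\sum_{k\le n}\ell_k\mathbf 1_{\{\ell_k\le n\}}}_{B_n}\;+\;\underbrace{\Big(\sum_{k\le n}\ell_k\mathbf 1_{\{\ell_k>n\}}-L_{n,\theta}\Big)}_{E_n},
\]
where, since $L_{n,\theta}$ is the largest digit, $E_n$ is the sum of the large digits with the maximal one deleted. The bulk term $B_n$ is treated exactly as in the lower bound: its mean is $\sim C\,n\log n$ and its fluctuations are $O(n)$, so $B_n\le(C+\varepsilon)n\log n$ eventually almost surely. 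For $E_n$ I must show $E_n=o(n\log n)$ almost surely, and here deleting the maximum is decisive. The two-large-digit estimate gives, along $(n_j)$, $\gamma_\theta(\text{second largest digit}>\varepsilon n\log n)\le C''/(\varepsilon^2(\log n)^2)$, which is summable; hence eventually only $L_{n,\theta}$ exceeds $\varepsilon n\log n$, every digit counted in $E_n$ lies in $(n,\varepsilon n\log n]$, and $E_n\le \sum_{k\le n}\ell_k\mathbf 1_{\{n<\ell_k\le\varepsilon n\log n\}}$. This remaining sum has mean $\sim C\,n\log\log n=o(n\log n)$ and variance $O(\varepsilon\,n^2\log n)$, so Chebyshev along $(n_j)$ forces it below $\varepsilon' n\log n$ eventually almost surely. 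Combining with the bound on $B_n$ gives $\limsup_n(S_{n,\theta}-L_{n,\theta})/(n\log n)\le C$.

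The main obstacle is precisely the control of $E_n$: a single anomalously large digit is what destroys the strong law for $S_{n,\theta}$ itself, and the whole theorem hinges on showing that, once that term is removed, no second digit is large enough to matter. Quantitatively this is the statement that two digits of size $\asymp n\log n$ essentially never occur together, delivered by the $C/k$ tail through $\binom{n}{2}\big(C/(\varepsilon n\log n)\big)^2\asymp(\log n)^{-2}$ being summable along a geometric subsequence -- the same mechanism that makes $S_{n,\theta}$ fail the strong law, since its large digits become summably rare only after the leading one is excised. The only care needed is that the $\psi$-mixing corrections in the variance and joint-tail bounds stay lower order; with exponential mixing they contribute negligibly and the argument goes through.
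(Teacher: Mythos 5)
Your overall strategy --- matching almost-sure lower and upper bounds via truncation at level $n$, Chebyshev plus $\psi$-mixing along a geometric subsequence $n_j=\lfloor(1+\delta)^j\rfloor$, monotonicity transfer, and a ``no two huge digits'' estimate so that deleting the maximum kills the heavy tail --- is the genuine Diamond--Vaaler route, and most of it is executed correctly: the lower bound via $\sum_{k\le n}\min(\ell_k,n)-n$, the bulk bound for $B_n$, and the second-largest-digit estimate all yield probabilities of order $(\log n)^{-2}$, which are indeed summable along $(n_j)$. For what it is worth, this is quite different from the paper's own argument, which deduces the statement from Theorem \ref{thm:khinchine} together with $L_{n,\theta}/(n\log n)\to 0$ in probability; since both of those are convergence-in-measure statements, your more laborious route is the one that actually engages with the almost-sure assertion.

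There is, however, a genuine quantitative gap at the crux step, the control of $E_n$. Having ruled out a second digit above $\varepsilon n\log n$, you bound $E_n$ by $\Sigma_n := \sum_{k\le n}\ell_k\mathbf 1_{\{n<\ell_k\le\varepsilon n\log n\}}$ and apply Chebyshev with variance $O(\varepsilon n^2\log n)$ at threshold $\varepsilon' n\log n$. The resulting probability is of order $\varepsilon/(\varepsilon'^2\log n)$, i.e.\ of order $1/j$ along $n_j=\lfloor(1+\delta)^j\rfloor$; this series \emph{diverges}, so Borel--Cantelli does not apply and ``eventually almost surely'' is unjustified at precisely the point where the theorem is hard. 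The repair is to lower the cut separating ``middle'' from ``huge'' digits: take for instance $b_n=n\log n/(\log\log n)^3$ in place of $\varepsilon n\log n$. Then (i) the probability that two digits among the first $n$ exceed $b_n$ is $O(n^2/b_n^2)=O((\log\log n)^6/(\log n)^2)$, still summable along $(n_j)$, and on the complementary event the unique digit above $b_n$ (if any) is the maximum and hence is deleted from $E_n$; (ii) the middle sum over $(n,b_n]$ has mean $O(n\log\log n)=o(n\log n)$ and variance $O(n\,b_n)=O(n^2\log n/(\log\log n)^3)$, so Chebyshev at threshold $\varepsilon' n\log n$ gives probabilities $O(1/(\log n\,(\log\log n)^3))$, which are summable along $(n_j)$ because $\sum_j 1/(j(\log j)^3)<\infty$. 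One further small repair: $\Sigma_n$ is not monotone in $n$ (its lower truncation level moves with $n$), so the transfer from $n_j$ to all $n$ must be run on the enlarged sum $\sum_{k\le n_{j+1}}\ell_k\mathbf 1_{\{n_j<\ell_k\le b_{n_{j+1}}\}}$, and the ``at most one digit'' event must be taken for digits among the first $n_{j+1}$ exceeding $b_{n_j}$; the same estimates go through. With these adjustments your argument closes and gives a correct proof.
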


\begin{remark}
Theorem~\ref{thm:khinchine} generalizes Khinchine’s classical result for continued fractions, while Theorem~\ref{thm:diamondvaaler} extends the Diamond--Vaaler law describing the negligible role of the maximal partial quotient.  
\end{remark}
\begin{remark} \label{rem.1.6}
The need for these theorems stems from the divergence of the mean:
\begin{align*}
\E_{\gamma_{\theta}}[\ell_1] &= \int_0^\theta \ell_1 (x)\mathrm{d}\gamma_{\theta}(x)  
= \sum_{i=m}^\infty i \cdot \gamma_\theta(I_\theta(i)) \\
&= \sum_{i=m}^\infty i \cdot \frac{1}{\log(1+\theta^2)} \log\left( \frac{(i+1)^2}{i(i+2)} \right) = \sum_{i=m}^\infty i \cdot \frac{1}{\log(1+\theta^2)} \log\left( 1+\frac{1}{i(i+2)} \right).
\end{align*}
Since $\log (1+x) \geq x/2$, for $0 \leq x \leq 1$, we have
\[
\log\left( 1+\frac{1}{i(i+2)} \right) \geq \frac{1}{2i(i+2)}.
\]
Therefore, 
\[
i \cdot \log\left( 1+\frac{1}{i(i+2)} \right) \geq \frac{1}{2(i+2)}.
\]
Since the series $\sum_{i=m}^\infty \frac{1}{2(i+2)}$ diverges, by the comparison test, $\sum_{i=m}^\infty i \cdot \gamma_\theta(I_\theta(i))$ diverges, 
i.e., the digit function $\ell_1(x)$ is not $\gamma_\theta$-integrable.
\end{remark}
\begin{theorem}[Failure of the Strong Law for $\theta$-Expansions]\label{thm:philipp}
Let $\{a(n)\}_{n \geq 1}$ be a sequence of positive numbers such that $a(n)/n$ is non-decreasing. Then, for $\gamma_\theta$-almost every $x \in [0, \theta]$,
\[
\lim_{n \to \infty} \frac{S_{n,\theta}(x)}{a(n)} = 0 \quad \text{or} \quad \limsup_{n \to \infty} \frac{S_{n,\theta}(x)}{a(n)} = \infty
\]
according as
\[
\sum_{n=1}^\infty \frac{1}{a(n)} < \infty \quad \text{or} \quad = \infty.
\]
\end{theorem}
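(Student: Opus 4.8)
The plan is to isolate the single large digit that drives the fluctuations of $S_{n,\theta}$ and to reduce the whole statement to a Borel--Cantelli dichotomy for the events $\{\ell_n > t\}$. Everything rests on the exact one-digit tail, which the explicit measure of Definition~\ref{inv.measure} makes transparent: since $\ell_1(x)\ge i$ iff $x\le 1/(\theta i)$,
\[
\gamma_\theta(\ell_1\ge i)=\frac{1}{\log(1+\theta^2)}\int_0^{1/(\theta i)}\frac{\theta}{1+\theta x}\,\mathrm{d}x=\frac{\log(1+1/i)}{\log(1+\theta^2)}\sim\frac{1}{i\,\log(1+\theta^2)}\qquad(i\to\infty).
\]
Because $\gamma_\theta$ is $T_\theta$-invariant, every digit has this same marginal, so $\gamma_\theta(\ell_n>t)\sim (t\log(1+\theta^2))^{-1}$ uniformly in $n$. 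I would then write $S_{n,\theta}=(S_{n,\theta}-L_{n,\theta})+L_{n,\theta}$, treating the trimmed sum by Theorem~\ref{thm:diamondvaaler} and the maximal digit by a Borel--Cantelli argument.

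\emph{Convergent case} ($\sum 1/a(n)<\infty$). First I would record the elementary fact that $a(n)/n$ nondecreasing together with $\sum 1/a(n)<\infty$ forces $n\log n=o(a(n))$: writing $a(n)=n\,c(n)$ with $c$ nondecreasing, the convergence of $\sum 1/(n c(n))$ is equivalent by integral comparison to $\int \mathrm{d}u/c(e^u)<\infty$, and a nondecreasing $g=c(e^{\,\cdot})$ with $\int \mathrm{d}u/g(u)<\infty$ must satisfy $g(u)/u\to\infty$, i.e. $c(n)/\log n\to\infty$. Consequently $S_{n,\theta}-L_{n,\theta}\sim (n\log n)/\log(1+\theta^2)=o(a(n))$ almost surely by Theorem~\ref{thm:diamondvaaler}. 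For the maximum, the tail estimate gives $\sum_k\gamma_\theta(\ell_k>\varepsilon a(k))\asymp \varepsilon^{-1}\sum_k 1/a(k)<\infty$, so the first (independence-free) Borel--Cantelli lemma yields $\ell_k\le\varepsilon a(k)$ for all large $k$; the finitely many earlier digits are bounded constants while $a(n)\ge a(1)n\to\infty$, hence $L_{n,\theta}\le\max(\text{const},\varepsilon a(n))$ and $L_{n,\theta}=o(a(n))$. Letting $\varepsilon\downarrow 0$ and adding the two pieces gives $S_{n,\theta}/a(n)\to 0$.

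\emph{Divergent case} ($\sum 1/a(n)=\infty$). Here I would discard everything but one digit: since $S_{n,\theta}\ge\ell_n$, it suffices to prove $\limsup_n \ell_n/a(n)=\infty$, i.e. that for every $M$ the events $B_n^{(M)}=\{\ell_n>M a(n)\}$ occur infinitely often. The tail estimate gives $\sum_n\gamma_\theta(B_n^{(M)})\asymp M^{-1}\sum_n 1/a(n)=\infty$. The digits are dependent, so the naive second Borel--Cantelli is unavailable; instead I would use the $\psi$-mixing of the digit process, which furnishes $\gamma_\theta(B_i^{(M)}\cap B_j^{(M)})\le(1+\psi(|i-j|))\gamma_\theta(B_i^{(M)})\gamma_\theta(B_j^{(M)})$ with $\psi(k)\to 0$. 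Feeding these pair correlations into the Kochen--Stone lemma makes the ratio $\bigl(\sum_{k\le N}\gamma_\theta(B_k^{(M)})\bigr)^2/\sum_{i,j\le N}\gamma_\theta(B_i^{(M)}\cap B_j^{(M)})$ bounded below by a positive constant, so $\gamma_\theta(\limsup_n B_n^{(M)})>0$; since $\{B_n^{(M)}\ \text{i.o.}\}$ is a tail event and $\psi$-mixing makes the tail $\sigma$-field trivial, this probability equals $1$. Intersecting over $M\in\N$ gives $\limsup_n \ell_n/a(n)=\infty$ almost surely, hence $\limsup_n S_{n,\theta}/a(n)=\infty$.

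The main obstacle is the divergent half, and specifically the mixing Borel--Cantelli: I must have the $\psi$-mixing coefficients of the $\theta$-expansion digit sequence under control (at least $\psi(k)\to 0$, ideally summable), which is exactly what the system's mixing analysis — the decay of transfer-operator iterates for $T_\theta$ — is meant to supply. The secondary technical points, namely the real-variable lemma $n\log n=o(a(n))$ in the convergent case and the uniform-in-$n$ replacement of $\gamma_\theta(\ell_n>Ma(n))$ by its $\bigl(Ma(n)\log(1+\theta^2)\bigr)^{-1}$ asymptotic, are routine once the one-digit tail and the $\psi$-mixing estimates are in hand.
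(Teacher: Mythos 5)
Your proposal is correct, and in the convergent case it takes a genuinely different route from the paper. For the divergent half the two arguments essentially coincide: both reduce to the single-digit events $\{\ell_n \ge M a(n)\}$, use Lemma~\ref{lema.2.1} to make their probabilities sum to infinity, and invoke a Borel--Cantelli lemma for $\psi$-mixing sequences followed by a zero--one upgrade; if anything your version is tidier, since the event $\{\ell_n > Ma(n) \text{ i.o.}\}$ is a tail event rather than a $T_\theta$-invariant one, so your tail-triviality argument via $\psi$-mixing is the correct upgrade mechanism, whereas the paper appeals to ergodicity. For the convergent half, the paper truncates the digits at level $a(n)$, splits the truncated sum into mean and centered parts, and controls the centered part through an $L^2$/Kronecker argument resting on the $\psi$-mixing covariance inequality (Proposition~\ref{prop:covariance-bound}); you instead invoke Theorem~\ref{thm:diamondvaaler} for the trimmed sum, prove the real-variable lemma that $a(n)/n$ nondecreasing together with $\sum 1/a(n)<\infty$ forces $n\log n=o(a(n))$, and dispose of $L_{n,\theta}$ by the first Borel--Cantelli lemma. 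Your route buys a substantial simplification: no second-moment or covariance estimates, no mixing at all in this half (the first Borel--Cantelli lemma needs no independence), and your integral-comparison lemma is a clean replacement for the paper's rather loose assertion that ``$\sum 1/a(n)<\infty$ forces the average of $\log a(n)$ to grow slower than $a(n)$.'' The price is a dependency: your convergent case is only as strong as the \emph{almost-sure} statement of Theorem~\ref{thm:diamondvaaler}, and the argument given in Section~\ref{sec:proof-diamondvaaler} of the paper really only establishes convergence in probability of $L_{n,\theta}/(n\log n)$ combined with Theorem~\ref{thm:khinchine}; so if you want your proof to stand fully on its own, you should either supply an almost-sure version of the trimmed-sum asymptotics or flag this reliance explicitly, whereas the paper's proof of Theorem~\ref{thm:philipp} is self-contained given Lemma~\ref{lema.2.1} and Proposition~\ref{prop:mixing}.
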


\begin{remark}
The condition that $a(n)/n$ is non-decreasing defines the class of ``regular norming sequences'' for which the theorem applies. This includes common choices like $a(n) = n\log n$, $a(n) = n\log\log n$, $a(n) = n^p$ for $p>1$ etc., while excluding pathological sequences with irregular growth.
\end{remark}
\begin{remark}
For the divergence part ($\sum 1/a(n) = \infty$), no regularity condition on $a(n)$ is needed.
\end{remark}

The final result generalizes a classical theorem of Philipp \cite{Philipp}  for regular continued fractions and confirms that the non-integrability of the digits is too severe to be remedied by any regular scaling. The proofs rely on the precise asymptotics of the digit distribution and, crucially, on establishing that the dynamical system is $\psi$-mixing (Proposition \ref{eq:psimixing}), a strong statistical property that allows us to apply powerful limit theorems for weakly dependent sequences.

The paper is structured as follows: Section \ref{sec.preliminaries} covers preliminaries, including the digit distribution and the proof of $\psi$-mixing. 
Sections \ref{sect.Khinchine}, \ref{sec:proof-diamondvaaler}, and \ref{sect.philipp} are dedicated to the proofs of Theorems \ref{thm:khinchine}, \ref{thm:diamondvaaler}, and \ref{thm:philipp}, respectively.

\section{Preliminaries} \label{sec.preliminaries}

The proofs of our main results rely on two foundational pillars: a precise understanding of the distribution of the digits $\ell_n(x)$ and strong statistical independence properties of the sequence $\{\ell_n\}$ under $\gamma_{\theta}$.
Throughout the paper, we will consider 
$\theta \in (0, 1)$ such that $\theta^2 = 1/m$ for some $m \in \N_+$.

\subsection{Distribution of the digits}

The following lemma provides the exact tail probability and asymptotic behavior of the digits, which is fundamental for all subsequent analysis.

\begin{lemma} \label{lema.2.1}
Let $m\in\mathbb N_+$ be given by $\theta^2=1/m$. For every integer $k \ge m$,
\[
\gamma_\theta(\ell_1\ge k)
=\frac{1}{\log(1+\theta^2)}\,
\log\!\Big(1+\frac{1}{k}\Big).
\]
In particular, as $k\to\infty$,
\[
\gamma_\theta(\ell_1\ge k)\sim\frac{1}{\log(1+\theta^2)}\cdot\frac{1}{k}.
\]
\end{lemma}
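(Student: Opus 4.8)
The plan is to reduce the tail event to a single subinterval of $[0,\theta]$ and then integrate the explicit density $h_\theta$ from Definition \ref{inv.measure}. The first step is to translate the condition $\ell_1(x)\ge k$ into a condition on $x$. Since $\ell_1(x)=\lfloor 1/(\theta x)\rfloor$ is non-increasing in $x$ on $(0,\theta]$, and since for an integer $k$ one has $\lfloor y\rfloor\ge k$ iff $y\ge k$, the inequality $\lfloor 1/(\theta x)\rfloor\ge k$ holds precisely when $1/(\theta x)\ge k$, i.e. when $0<x\le 1/(\theta k)$. Equivalently, in terms of the cylinders \eqref{cylinder}, $\{\ell_1\ge k\}=\bigcup_{i\ge k}I_\theta(i)=\left(0,\tfrac{1}{\theta k}\right]$. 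It is worth noting that for $k=m$ this recovers the whole interval, consistent with the constraint $\ell_1\ge m$ and the endpoint identity $1/(\theta m)=\theta$ coming from $\theta^2=1/m$.

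Having identified the event, I would simply integrate the Gauss density over it. Using $\tfrac{\mathrm d}{\mathrm dx}\log(1+\theta x)=\tfrac{\theta}{1+\theta x}$, we get
\[
\gamma_\theta(\ell_1\ge k)=\frac{1}{\log(1+\theta^2)}\int_0^{1/(\theta k)}\frac{\theta}{1+\theta x}\,\mathrm dx=\frac{1}{\log(1+\theta^2)}\Big[\log(1+\theta x)\Big]_0^{1/(\theta k)},
\]
and evaluating the antiderivative at the endpoints yields $\log(1+1/k)-\log 1=\log(1+1/k)$, which is exactly the claimed closed form. As a cross-check, one can instead sum the cylinder masses $\gamma_\theta(I_\theta(i))=\frac{1}{\log(1+\theta^2)}\big(\log(1+1/i)-\log(1+1/(i+1))\big)$ from the Remark over $i\ge k$; the series telescopes and the remainder $\log(1+1/(N+1))\to 0$ as $N\to\infty$, giving the same value.

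For the asymptotic statement, I would use the elementary expansion $\log(1+t)=t+O(t^2)$ as $t\to 0$ with $t=1/k$, so that $\log(1+1/k)\sim 1/k$ and hence $\gamma_\theta(\ell_1\ge k)\sim \frac{1}{\log(1+\theta^2)}\cdot\frac{1}{k}$. There is no substantial obstacle in this lemma: it is essentially a one-line integration against the explicit invariant density. The only points requiring genuine care are the correct identification of the tail event as the half-open interval $(0,1/(\theta k)]$ (including the equivalence of the floor inequality with the strict inequality $1/(\theta x)\ge k$) and the endpoint bookkeeping at $k=m$; everything downstream is routine.
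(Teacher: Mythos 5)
Your proposal is correct and takes essentially the same approach as the paper: both compute $\gamma_\theta(\ell_1\ge k)$ by integrating the explicit density $h_\theta$ over the tail set. The only cosmetic difference is that you identify $\{\ell_1\ge k\}$ as the single interval $\left(0,\tfrac{1}{\theta k}\right]$ and integrate once, whereas the paper sums the cylinder masses $\gamma_\theta(I_\theta(i))$ over $i\ge k$ and telescopes --- the same computation, organized differently.
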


\begin{proof}
Recall that $\{x: \ell_1(x) = i\} = I_\theta(i)$, where $I_\theta(i)$ is given by (\ref{cylinder}). Therefore, for any $k > m$, we have:
\begin{align*}
\gamma_\theta(\ell_1\ge k) &= \sum_{i \geq k} \gamma_\theta(I_\theta(i)) = \sum_{i \geq k} \int_{1/(\theta(i+1))}^{1/(\theta i)} \frac{1}{\log(1+\theta^2)} \cdot \frac{\theta}{1 + \theta x}  dx \\
&= \frac{1}{\log(1+\theta^2)} \sum_{i \geq k} \left[ \log(1 + \theta x) \right]_{x=1/(\theta(i+1))}^{x=1/(\theta i)} \\
&= \frac{1}{\log(1+\theta^2)} \sum_{i \geq k} 
\left[ \log \left( 1+\frac{1}{i} \right) - 
\log \left( 1+\frac{1}{i+1} \right) \right] \\
&= \frac{1}{\log(1+\theta^2)} \log \left( 1+\frac{1}{k} \right).
\end{align*}
The asymptotic \(\gamma_\theta(\ell_1\ge k)\sim (1/\log(1+\theta^2))\cdot 1/k\)
is immediate from the expansion \(\log(1+1/k)\sim 1/k\) as \(k\to\infty\).
\end{proof}

\begin{remark}
By the $T_\theta$-invariance of $\gamma_\theta$, the same asymptotic holds for $\ell_n(x)$ for any $n \ge 1$, i.e, 
$\gamma_\theta(\ell_n\ge k)
=\frac{1}{\log(1+\theta^2)}\,
\log\!\Big(1+\frac{1}{k}\Big)$. 
\end{remark}
\subsection{$\psi$-mixing property}

The proof of Theorem \ref{thm:philipp} requires a strong control on the dependence between distant digits. This is guaranteed by the following proposition.

\begin{proposition}[$\psi$-Mixing]\label{prop:mixing}
Let $\theta \in (0, 1)$ such that $\theta^2 = 1/m$ for some $m \in \N_+$. 
The dynamical system $([0, \theta], \mathcal{B}, \gamma_\theta, T_\theta)$ is $\psi$-mixing. That is, there exist constants $K >0$ and $0 < \rho < 1$ such that for any $A \in \sigma(\ell_1, \ldots, \ell_k)$ and any $B \in \sigma(\ell_{k+n}, \ell_{k+n+1}, \ldots)$,
\begin{equation}\label{eq:psimixing}
|\gamma_\theta(A \cap B) - \gamma_\theta(A)\gamma_\theta(B)| \leq K \rho^n \gamma_\theta(A)\gamma_\theta(B).
\end{equation}
Consequently, the mixing coefficients $\psi(n) = K \rho^n$ are summable:  $\displaystyle \sum_{n=1}^\infty \psi(n) < \infty$.
\end{proposition}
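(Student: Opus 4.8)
The plan is to reduce the statement to a single cylinder and then combine two classical ingredients: a uniform bounded-distortion (Rényi) estimate for the iterates $T_\theta^k$, and the exponential decay of correlations furnished by the Gauss–Kuzmin–Lévy theory already available for this system. First I would exploit the additivity of the left-hand side. Writing $A \in \sigma(\ell_1,\dots,\ell_k)$ as a countable disjoint union of rank-$k$ cylinders $\Delta = \{\ell_1 = i_1, \dots, \ell_k = i_k\}$, the triangle inequality reduces \eqref{eq:psimixing} to proving, for each single cylinder $\Delta$, the bound $|\gamma_\theta(\Delta\cap B) - \gamma_\theta(\Delta)\gamma_\theta(B)| \le K\rho^n \gamma_\theta(\Delta)\gamma_\theta(B)$; summing over the cylinders contained in $A$ then restores the general case, since $\sum_\Delta \gamma_\theta(\Delta) = \gamma_\theta(A)$.

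On a fixed cylinder $\Delta$ the map $T_\theta^k$ is a monotone diffeomorphism onto $(0,\theta)$, with inverse branch $\phi_\Delta$ obtained by composing the Möbius maps $y\mapsto 1/(\theta i_j + y)$. Performing the change of variables $y = T_\theta^k x$ in $\gamma_\theta(\Delta\cap B)$ and using the $T_\theta$-invariance of $\gamma_\theta$, I would write $\gamma_\theta(\Delta \cap T_\theta^{-k}E) = \int_E \Psi_\Delta\, d\gamma_\theta$ for every Borel $E$, where $\Psi_\Delta(y) = h_\theta(\phi_\Delta(y))\,|\phi_\Delta'(y)|/h_\theta(y)$ is the Radon--Nikodym density and $\int_0^\theta \Psi_\Delta\, d\gamma_\theta = \gamma_\theta(\Delta)$. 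The next step is the uniform distortion bound: using the continuant recurrence for the $\theta$-convergents together with the fact that all digits satisfy $\ell_j \ge m$ (which forces a uniform contraction of the branches), and the fact that $h_\theta$ is bounded between two positive constants on $[0,\theta]$, one shows there is $C_0$ independent of $\Delta$ and $k$ with $\|\Psi_\Delta\|_{\mathrm{BV}} \le C_0\,\gamma_\theta(\Delta)$.

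To produce the geometric factor $\rho^n$ I would use that $B$ depends only on digits from index $k+n$ onward: since $\ell_{k+n}(x) = \ell_1(T_\theta^{k+n-1}x)$, there is a Borel set $D$ with $\mathbf 1_B = \mathbf 1_D\circ T_\theta^{k+n-1}$ and $\gamma_\theta(D) = \gamma_\theta(B)$. After the change of variables this turns $\gamma_\theta(\Delta\cap B)$ into the correlation integral $\int (\mathbf 1_D\circ T_\theta^{\,n-1})\,\Psi_\Delta\, d\gamma_\theta$. Writing $\mathcal L$ for the transfer operator of $T_\theta$ with respect to $\gamma_\theta$, this equals $\int \mathbf 1_D\,\mathcal L^{\,n-1}\Psi_\Delta\, d\gamma_\theta$, so the Gauss–Kuzmin–Lévy spectral gap established in \cite{CR-2003,SebeLascuCraiova,Sebe2017,SebeLascu2019} --- namely $\|\mathcal L^{m}g - \int g\,d\gamma_\theta\|_\infty \le C\rho^{m}\|g\|_{\mathrm{BV}}$ --- gives $|\gamma_\theta(\Delta\cap B) - \gamma_\theta(D)\gamma_\theta(\Delta)| \le C\rho^{\,n-1}\|\Psi_\Delta\|_{\mathrm{BV}}\,\gamma_\theta(D)$. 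Combining with the distortion bound and $\gamma_\theta(D)=\gamma_\theta(B)$ yields the per-cylinder estimate with $\rho$ as above and $K$ absorbing $C C_0\rho^{-1}$; summing over cylinders finishes the proof, and summability of $\psi(n) = K\rho^n$ is immediate from $\rho < 1$.

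The hard part will be the uniform bounded-distortion estimate $\|\Psi_\Delta\|_{\mathrm{BV}} \le C_0\gamma_\theta(\Delta)$ holding simultaneously over all ranks $k$ and all cylinders $\Delta$: this requires controlling $|\phi_\Delta'|$ and its variation through the $\theta$-continuant recurrence and verifying that the lower bound $\ell_j\ge m$ yields contraction uniform in the digits, which is where the specific arithmetic of the $\theta$-expansion (as opposed to the classical Gauss map) enters. A secondary point of care is matching the function space in which the cited Gauss–Kuzmin–Lévy results provide the spectral gap (monotone or Lipschitz densities) with the bounded-variation pairing used above; if the available rate is stated only for initial densities of a restricted class, I would first approximate $\Psi_\Delta$ within that class, or alternatively derive the gap directly from the two-sided cylinder comparison $\gamma_\theta(\Delta\cap T_\theta^{-k}E)\asymp\gamma_\theta(\Delta)\gamma_\theta(E)$ and a standard coupling argument.
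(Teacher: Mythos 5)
Your proposal is correct, but it reaches the conclusion by a genuinely different route than the paper. The paper's proof verifies that $T_\theta$ is a Gibbs--Markov map (uniform expansion, Markov partition onto $(0,\theta]$, bounded distortion along cylinders, bounded invariant density) and then delegates the rest to black boxes: Baladi \cite{Baladi2000} for the spectral gap and exponential decay of correlations, and Bradley \cite{Bradley2007} for the passage from correlation decay to exponential $\psi$-mixing. You instead derive the multiplicative bound \eqref{eq:psimixing} by hand: decompose $A$ into rank-$k$ cylinders $\Delta$, pull each $\gamma_\theta(\Delta\cap B)$ back through the inverse branch $\phi_\Delta$ to exhibit the conditional density $\Psi_\Delta$, prove the uniform Rényi-type bound $\|\Psi_\Delta\|_{\mathrm{BV}}\le C_0\gamma_\theta(\Delta)$, and then apply the transfer-operator spectral gap to $\Psi_\Delta$ to produce the factor $\rho^{n-1}$ together with the crucial product $\gamma_\theta(\Delta)\gamma_\theta(B)$ on the right-hand side. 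This is the classical Gauss--Kuzmin--L\'evy route to $\psi$-mixing for continued-fraction digits, and it has a real advantage here: the multiplicative form of \eqref{eq:psimixing} (with $\gamma_\theta(A)\gamma_\theta(B)$ rather than an additive error) does not follow from decay of correlations for H\"older or BV observables by any general theorem, so the paper's citation of ``decay of correlations implies $\psi$-mixing'' is actually its weakest link; your per-cylinder argument is precisely the mechanism that makes that step rigorous, at the cost of having to prove the uniform BV distortion estimate (the same distortion work as the paper's part (c), but sharpened to a bound on the variation of $\Psi_\Delta$ proportional to $\gamma_\theta(\Delta)$) and of checking that the cited Gauss--Kuzmin--L\'evy results for $\theta$-expansions supply the gap in a function space compatible with BV, both of which you correctly flag and both of which are standard for maps with full Möbius branches and digits bounded below by $m$.
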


\begin{proof}
By Baladi \cite[Chapters 2–4]{Baladi2000} the transfer operator of a Gibbs-Markov map has a spectral gap and hence exponential decay of correlations. 
Bradley \cite[Vol. 2, Theorem 1.5.5]{Bradley2007} gives in a more general setting that exponential decay of correlations in such systems implies exponential decay of $\psi(n)$ coefficients. Thus one obtains \eqref{eq:psimixing} with some constants $K,\rho<1$.
 
We show that $T_\theta$ is a Gibbs-Markov map by verifying the properties of uniform expansion, a Markov partition, and the bounded distortion property. 

The $\psi$-mixing property then follows from standard results in ergodic theory \cite{Baladi2000,Bradley2007}.

\noindent (a) \textit{Uniform expansion and Markov partition.} 
The map $T_\theta$ is defined on the partition $\mathcal{P} = \{I_\theta(i)\}_{i \geq m}$.
On each branch, the map is given by $T_{\theta,i}(x) = \frac{1}{x} - i\theta$, which is a smooth bijection from $I_\theta(i)$ onto $(0, \theta]$. Its derivative is $T_{\theta,i}'(x) = -1/x^2$. Since $x \in I_\theta(i) \subset (0, \theta]$, we have $|T_{\theta,i}'(x)| \geq 1/\theta^2 > 1$, establishing that $T_\theta$ is \textit{uniformly expanding}.
The image $T_\theta(I_\theta(i)) = (0, \theta]$ is a union of elements of $\mathcal{P}$ (in fact, the entire space), confirming that $\mathcal{P}$ is a \textit{Markov partition}.

\noindent (b) \textit{Invariant measure and bounded density.}
The map $T_\theta$ preserves the measure $\gamma_\theta$ from Definition \ref{inv.measure}. 
Its density $h_\theta(x)$ is strictly positive and bounded on $[0, \theta]$:
\[
0 < \frac{1}{\log(1+\theta^2)} \cdot \frac{\theta}{1+\theta^2} \leq h_\theta(x) \leq \frac{1}{\log(1+\theta^2)} \cdot \theta < \infty.
\]
Therefore, $\gamma_\theta$ is {equivalent to Lebesgue measure} $\lambda$ on $[0, \theta]$.

\noindent (c) \textit{Bounded distortion property.}
Let $\mathcal{P}^{(n)}$ be the partition into $n$-cylinders, i.e., sets of the form
\[
I_n({i_1, \ldots, i_n}) = \{ x \in [0, \theta] : \ell_1(x)=i_1, \ldots, \ell_n(x)=i_n \}.
\]
For any $x, y \in I_n(i_1, \ldots, i_n)$, the map $T_\theta^n$ is a differentiable bijection onto $(0, \theta]$.
We claim there exists a constant $C_d > 0$ such that for all $n \geq 1$, all $n$-cylinders, and all $x, y$ within one,
\begin{equation}\label{eq:distortion}
\left| \frac{(T_\theta^n)'(x)}{(T_\theta^n)'(y)} - 1 \right| \leq C_d \left|T_\theta^n(x) - T_\theta^n(y)\right|.
\end{equation}
To prove this, we use the chain rule:
\[
\log |(T_\theta^n)'(x)| = \sum_{j=0}^{n-1} \log |T_\theta'(T_\theta^j(x))| = \sum_{j=0}^{n-1} \log(1/(T_\theta^j(x))^2) = -2 \sum_{j=0}^{n-1} \log(T_\theta^j(x)).
\]
Thus, for $x, y \in I_n(i_1, \ldots, i_n)$,
\begin{equation} \label{2.03}
\left| \log |(T_\theta^n)'(x)| - \log |(T_\theta^n)'(y)| \right| \leq 2 \sum_{j=0}^{n-1} \left| \log(T_\theta^j(x)) - \log(T_\theta^j(y)) \right|.
\end{equation}
By the Mean Value Theorem, for each $j$,
\[
 \left| \log(T_\theta^j(y)) - \log(T_\theta^j(x))\right| \leq 
  \frac{\left|T_\theta^j(x) - T_\theta^j(y) \right|}{\min\left(T_\theta^j(x), T_\theta^j(y) \right)}
\]
and by (\ref{2.03}) we obtain: 
\[
\left| \log |(T_\theta^n)'(x)| - \log |(T_\theta^n)'(y)| \right| \leq 2 \sum_{j=0}^{n-1} \frac{\left|T_\theta^j(x) - T_\theta^j(y) \right|}{\min\left(T_\theta^j(x), T_\theta^j(y) \right)}.
\]
As we mentioned above, 
$|T_{\theta,i}'(x)| \geq 1/\theta^2 = \beta > 1$, hence each inverse branch contracts distances at least by factor $\beta^{-1}$. 
Consequently, for $x, y$ in the same 
$n$-cylinder, 
\begin{equation}\label{2.4}
|T_\theta^j(x)-T_\theta^j(y)| \le C_1 \,\beta^{-(n-j)}\cdot |T_\theta^n(x)-T_\theta^n(y)|,
\qquad 0\le j\le n-1,
\end{equation}
where $C_1 \geq 1$ is an absolute constant coming from at most one application of the Mean Value Theorem on the last step. 

For $j<n$, the image $T_\theta^j(I_n(i_1, \ldots, i_n))$
is contained in a rank-$1$ cylinder $I_{\theta}(i_{j+1})$, 
whose points are bounded away from $0$ by
\[
\inf I_{\theta}(i_{j+1}) = \frac{1}{\theta(i_{j+1}+1)} > 0.
\]
Hence for all $x, y \in I_n(i_1, \ldots, i_n)$ and all $j < n$
\[
\min\left(T_\theta^j(x), T_\theta^j(y) \right) \geq \frac{1}{\theta(i_{j+1}+1)} > 0. 
\]
Combining \eqref{2.4} and the denominator bound gives, for $0 \leq j \leq n-1$,
\[
\frac{|T_\theta^j(x) - T_\theta^j(y)|}{\min(T_\theta^j(x), T_\theta^j(y))} \leq C_1 \beta^{-(n-j)} |T_\theta^n(x) - T_\theta^n(y)| \cdot \theta (i_{j+1}+1).
\]
Summing over $j$ and factoring out $\left|T_\theta^n(x) - T_\theta^n(y) \right|$ yields
\[
\left| \log |(T_\theta^n)'(x)| - \log |(T_\theta^n)'(y)| \right| 
\leq 2C_1 \left|T_\theta^n(x) - T_\theta^n(y) \right| \sum_{j=0}^{n-1} \theta (i_{j+1}+1) \beta^{-(n-j)}
\]
The last sum is uniformly bounded in $n$ and in the cylinder because the geometric factor $\beta^{-(n-j)}$ forces rapid decay for the terms. 
Thus there exists $C_2 > 0$ with
\[
\left| \log |(T_\theta^n)'(x)| - \log |(T_\theta^n)'(y)| \right| \leq C_2 \left|T_\theta^n(x) - T_\theta^n(y) \right|.
\]
Exponentiating and using 
$\left| e^u -1 \right| \leq e^{|u|} - 1 \leq C_3 |u|$ for small $u$ gives \eqref{eq:distortion} with $C_d$ depending only on $C_2$.


%

The established properties, namely uniform expansion, Markov partition, bounded distortion, and an invariant measure with bounded density, confirm that $([0, \theta], T_\theta, \gamma_\theta, \mathcal{P})$ 
is a \textit{Gibbs-Markov map}. 
For such maps, it is a classical result that they exhibit exponential decay of correlations for H\"older continuous observables \cite{Baladi2000}.
This decay of correlations implies the stronger $\psi$-mixing property for the entire system \cite{Bradley2007}. The explicit form \eqref{eq:psimixing} and the summability of $\psi(n)$ follow from the geometric decay rate $\rho^n$. 
\end{proof}

\section{Proof of Theorem \ref{thm:khinchine}(Khinchine's Weak Law)}  \label{sect.Khinchine}
The core idea of the proof is to handle the non-integrable sum $S_{n,\theta}(x)$ by a truncation argument. 

For any positive integer $N \geq m$, define the \textit{truncated digit function}:
\[
\ell^{(N)}(x):=\ell_1(x)\cdot\mathbf{1}_{\{\ell_1(x)\le N\}},
\]
where 
$\mathbf{1}_{A}$ is the {indicator function} of the set $A$. 

We first establish the asymptotic behavior of its expectation.

\begin{lemma} \label{lem.3.1}
As $N \to \infty$, the expectation of the truncated digit satisfies 
\[
\E[\ell^{(N)}]=
\int_0^\theta \ell^{(N)}(x) \mathrm{d}\gamma_{\theta}(x) \sim \frac{1}{\log(1+\theta^2)} \log N.
\]
\end{lemma}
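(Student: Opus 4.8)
The plan is to reduce the computation of $\E[\ell^{(N)}]$ to a telescoping sum over the exact digit tails supplied by Lemma~\ref{lema.2.1}. Since $\ell^{(N)}$ takes the value $i$ precisely on the cylinder $I_\theta(i)=\{\ell_1=i\}$ for $m\le i\le N$ and vanishes otherwise, I would first write
\[
\E[\ell^{(N)}]=\sum_{i=m}^{N} i\,\gamma_\theta(\ell_1=i).
\]
A weighted sum of this form is awkward to estimate directly, so the key maneuver is summation by parts. Writing $F(k):=\gamma_\theta(\ell_1\ge k)$, one has $\gamma_\theta(\ell_1=i)=F(i)-F(i+1)$, and Abel summation converts the weighted sum into a sum of tails,
\[
\E[\ell^{(N)}]=\sum_{i=m}^{N}F(i)+(m-1)F(m)-N\,F(N+1).
\]

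The payoff comes from inserting the exact tail from Lemma~\ref{lema.2.1}, namely $F(i)=\frac{1}{\log(1+\theta^2)}\log(1+1/i)$. The main sum then telescopes exactly, because $\log(1+1/i)=\log(i+1)-\log i$:
\[
\sum_{i=m}^{N}F(i)=\frac{1}{\log(1+\theta^2)}\sum_{i=m}^{N}\bigl(\log(i+1)-\log i\bigr)=\frac{1}{\log(1+\theta^2)}\,\log\frac{N+1}{m}.
\]
This already produces the claimed leading term $\frac{1}{\log(1+\theta^2)}\log N$. It then remains to check that the two boundary terms are negligible: $(m-1)F(m)$ is a fixed constant, and $N\,F(N+1)=\frac{N}{\log(1+\theta^2)}\log(1+\frac{1}{N+1})\to\frac{1}{\log(1+\theta^2)}$ by the expansion $\log(1+x)\sim x$, so this term is bounded as well. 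Hence both are $O(1)$ and are swallowed by the divergent logarithm, giving $\E[\ell^{(N)}]\sim\frac{1}{\log(1+\theta^2)}\log N$.

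There is no serious obstacle here; the only point requiring a little care is the bookkeeping in the summation-by-parts step and confirming that the boundary term $N\,F(N+1)$ does not grow — a plausible worry since it carries a factor $N$, but the tail decays like $1/N$, so the product stays bounded. An alternative route would bypass Abel summation by using the asymptotic $i\,\gamma_\theta(\ell_1=i)\sim\frac{1}{\log(1+\theta^2)}\cdot\frac{1}{i}$ (read off from Remark~\ref{rem.1.6}) together with the standard fact that $\sum_{i\le N}c/i\sim c\log N$; I prefer the telescoping version because it yields the exact closed form $\frac{1}{\log(1+\theta^2)}\log\frac{N+1}{m}+O(1)$ rather than a bare asymptotic equivalence, which will also be convenient when controlling error terms in the proof of Theorem~\ref{thm:khinchine}.
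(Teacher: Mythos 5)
Your proof is correct, but it takes a genuinely different route from the paper. The paper works directly with the exact cylinder measures: it writes $\E[\ell^{(N)}]=\sum_{i=m}^{N} i\,\gamma_\theta(I_\theta(i))$ with $\gamma_\theta(I_\theta(i))=\frac{1}{\log(1+\theta^2)}\log\bigl(1+\frac{1}{i(i+2)}\bigr)$, observes that the $i$-th term is asymptotically $\frac{1}{\log(1+\theta^2)}\cdot\frac{1}{i}$, and concludes by comparison with the harmonic series. Your argument instead converts the weighted sum into a sum of tails via Abel summation and exploits the exact telescoping of $\log(1+1/i)=\log(i+1)-\log i$ from Lemma~\ref{lema.2.1}; I checked the summation-by-parts identity $\E[\ell^{(N)}]=\sum_{i=m}^{N}F(i)+(m-1)F(m)-N\,F(N+1)$ and the boundary estimates, and they are right (note also that $F(m)=1$ since $\theta^2=1/m$, so that term is literally $m-1$). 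What your version buys is an exact closed form $\frac{1}{\log(1+\theta^2)}\log\frac{N+1}{m}+O(1)$ with an explicit $O(1)$, which is sharper than a bare asymptotic equivalence; it also sidesteps a point the paper leaves tacit, namely that summing term-wise asymptotic equivalences $a_i\sim b_i$ of positive terms is only legitimate because the comparison series diverges (a standard but unstated Ces\`aro-type fact). The paper's route is shorter and needs only the single-cylinder asymptotics; yours is slightly longer but fully elementary and self-contained given Lemma~\ref{lema.2.1}. Your closing remark about the alternative route via $i\,\gamma_\theta(\ell_1=i)\sim\frac{1}{\log(1+\theta^2)}\cdot\frac{1}{i}$ is, in fact, essentially the paper's proof.
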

\begin{proof}
Since $\ell_1 (x) = i$ if and only if $x \in I_{\theta}(i)$, we can write the expectation as a sum:
\[
\E[\ell^{(N)}] = \sum_{i=m}^{N} i \cdot \gamma_{\theta}(I_{\theta}(i)).
\]
From direct calculation, we obtain that:
\[
\gamma_{\theta}(I_{\theta}(i)) = \frac{1}{\log(1+\theta^2)}  \log\left( \frac{(i+1)^2}{i(i+2)} \right)
\]
Therefore,
\[
\E[\ell^{(N)}]= 
\frac{1}{\log(1+\theta^2)} \sum_{i=m}^{N}  i \cdot \log\left( \frac{(i+1)^2}{i(i+2)} \right) .
\]
\\
Note that 
$\log\left( \frac{(i+1)^2}{i(i+2)} \right) = 
\log\left( 1 + \frac{1}{i(i+2)} \right) 
\sim \frac{1}{i^2}$ as $i \to \infty$. 
Therefore, the sum behaves like the harmonic series: 
\[
\sum_{i=m}^{N} i \cdot \frac{1}{i^2} =
\sum_{i=m}^{N} \frac{1}{i} \sim \log N. 
\]
Thus,   \(\mathbb{E}[\ell^{(N)}] \sim \frac{1}{\log(1+\theta^{2})} \log N\), which proves the lemma.  
\end{proof}

We now proceed with the proof of Theorem \ref{thm:khinchine}. 
Define the truncation level:
\[
N(n) = \lfloor n \log n \rfloor.
\]

Note that $N(n) \sim n \log n$ as $n \to \infty$ so all asymptotic results involving $N(n)$ remain valid. 

Define the truncated sum and remainder:
\[ 
\begin{aligned}
S_{n,\theta}^{(N(n))}(x) &:= \sum_{k=1}^n \ell_k(x) \mathbf{1}_{\{\ell_k(x) \leq N(n)\}}, \\
R_n^{(N(n))}(x) &:= \sum_{k=1}^n \ell_k(x) \mathbf{1}_{\{\ell_k(x) > N(n)\}} = S_{n,\theta}(x) - S_{n,\theta}^{(N(n))}(x).
\end{aligned}
\]
By Lemma \ref{lem.3.1} and since $N(n) = \lfloor n \log n \rfloor \sim n \log n$:  
\[
\E[\ell^{(N(n))}] \sim \frac{1}{\log(1+\theta^2)} \log N(n) \sim \frac{1}{\log(1+\theta^2)} \log(n \log n) = \frac{1}{\log(1+\theta^2)} (\log n + \log \log n).
\]
Therefore, by stationarity:
\[
\frac{\E[S_{n,\theta}^{(N(n))}]}{n \log n} = \frac{n \E[\ell^{(N(n))}]}{n \log n} \sim \frac{1}{\log(1+\theta^2)} \cdot \frac{\log n + \log \log n}{\log n} \to \frac{1}{\log(1+\theta^2)}.
\]

Next, we show that the remainder term $R_n^{(N(n))}$, when normalized by $n \log n$, converges to zero in $\gamma_\theta$-probability.

\begin{lemma}\label{lem:remainder-negligible}
For the choice $N(n) = \lfloor n \log n \rfloor$ and $n$ sufficiently large so that $N(n) \geq m$, we have:
\[
\frac{R_n^{(N(n))}(x)}{n \log n} \xrightarrow{\gamma_\theta} 0.
\]
\end{lemma}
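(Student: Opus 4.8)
The plan is to avoid any first-moment estimate on $R_n^{(N(n))}$ and instead to show that, with $\gamma_\theta$-probability tending to $1$, \emph{none} of the first $n$ digits exceeds the truncation level $N(n)$, so that the remainder vanishes identically. The starting observation is that $R_n^{(N(n))}(x)=\sum_{k=1}^n \ell_k(x)\mathbf 1_{\{\ell_k(x)>N(n)\}}$ is nonnegative and equals $0$ precisely when $\ell_k(x)\le N(n)$ for every $k\le n$. Hence
\[
\{x: R_n^{(N(n))}(x)>0\}\subseteq\bigcup_{k=1}^n\{x:\ell_k(x)>N(n)\},
\]
and it suffices to prove that the probability of the right-hand event tends to $0$.

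First I would apply the union bound together with the stationarity of the digit sequence under $\gamma_\theta$ (the remark following Lemma \ref{lema.2.1}, which gives $\gamma_\theta(\ell_k\ge j)=\gamma_\theta(\ell_1\ge j)$ for all $k$); this yields
\[
\gamma_\theta\bigl(R_n^{(N(n))}>0\bigr)\le\sum_{k=1}^n\gamma_\theta\bigl(\ell_k>N(n)\bigr)=n\,\gamma_\theta\bigl(\ell_1\ge N(n)+1\bigr).
\]
Next I would insert the exact tail formula from Lemma \ref{lema.2.1} and bound it using $\log(1+x)\le x$:
\[
\gamma_\theta\bigl(\ell_1\ge N(n)+1\bigr)=\frac{1}{\log(1+\theta^2)}\log\!\Bigl(1+\frac{1}{N(n)+1}\Bigr)\le\frac{1}{\log(1+\theta^2)}\cdot\frac{1}{N(n)}.
\]
Since $N(n)=\lfloor n\log n\rfloor\ge\tfrac12\,n\log n$ for $n$ large, combining the last two displays gives
\[
\gamma_\theta\bigl(R_n^{(N(n))}>0\bigr)\le\frac{n}{\log(1+\theta^2)\,N(n)}\le\frac{2}{\log(1+\theta^2)\,\log n}\xrightarrow[n\to\infty]{}0.
\]
Finally, for any $\varepsilon>0$ the inclusion $\{R_n^{(N(n))}/(n\log n)>\varepsilon\}\subseteq\{R_n^{(N(n))}>0\}$ forces $\gamma_\theta(R_n^{(N(n))}/(n\log n)>\varepsilon)\to 0$, which is exactly convergence to $0$ in $\gamma_\theta$-probability.

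The hard part --- or rather the point that dictates the whole strategy --- is that one cannot run the obvious Markov/first-moment argument on the remainder: because $\gamma_\theta(I_\theta(i))\sim (\log(1+\theta^2))^{-1} i^{-2}$, the truncated-from-below variable $\ell_1\mathbf 1_{\{\ell_1>N\}}$ still has \emph{infinite} expectation, $\E[\ell_1\mathbf 1_{\{\ell_1>N\}}]=\infty$, so $\E[R_n^{(N(n))}]$ is infinite and Markov's inequality is useless. The key realization is therefore that the large digits are so rare that the event of seeing even one of them in the first $n$ steps is itself negligible; controlling the \emph{occurrence} of a large digit (a tail probability of order $1/N(n)$ per step) rather than its \emph{size} is what makes the estimate go through, and it simultaneously delivers the slightly stronger statement that $R_n^{(N(n))}$ is $\gamma_\theta$-almost surely zero on an event of probability $1-o(1)$.
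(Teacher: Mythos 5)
Your proof is correct and follows essentially the same route as the paper: the inclusion $\{R_n^{(N(n))}>0\}\subseteq\bigcup_{k=1}^n\{\ell_k>N(n)\}$, the union bound with stationarity, and the tail estimate from Lemma \ref{lema.2.1} giving a bound of order $1/\log n$. Your added remark that a Markov-inequality argument is unavailable (since $\E[\ell_1\mathbf{1}_{\{\ell_1>N\}}]=\infty$) is accurate, but it only explains the choice of strategy; the argument itself matches the paper's.
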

\begin{proof}
For any $\varepsilon > 0$:
\[
\left\{ \frac{R_n^{(N(n))}(x)}{n \log n} > \varepsilon \right\} \subset \left\{ R_n^{(N(n))}(x) > 0 \right\} = \bigcup_{k=1}^n \left\{ \ell_k(x) > N(n) \right\}.
\]
By the union bound and stationarity:
\[
\gamma_\theta\left( \frac{R_n^{(N(n))}(x)}{n \log n} > \varepsilon \right) \leq n \cdot \gamma_\theta(\ell_1 > N(n)).
\]
Using Lemma \ref{lema.2.1}:
\[
\gamma_\theta(\ell_1 > N(n)) \leq \frac{1}{\log(1+\theta^2)} \frac{1}{N(n)} = \frac{1}{\log(1+\theta^2)} \frac{1}{\lfloor n \log n \rfloor}.
\]
Therefore:
\[
\gamma_\theta\left( \frac{R_n^{(N(n))}(x)}{n \log n} > \varepsilon \right) \leq \frac{1}{\log(1+\theta^2)} \frac{n}{\lfloor n \log n \rfloor} \sim \frac{1}{\log(1+\theta^2)} \frac{1}{\log n} \to 0.
\]

\end{proof}

To prove the convergence of the truncated sum, we first state a general covariance inequality for $\psi$-mixing sequences.

\begin{proposition}[General $\psi$-Mixing Covariance Inequality, \cite{Bradley2007}]
\label{prop:covariance-bound}
Let $\{X_n\}$ be a $\psi$-mixing stationary sequence with mixing coefficients $\psi(n)$. Then for any $p, q$:
\[
|\Cov(X_p, X_q)| \leq \psi(|q-p|) \sqrt{\Var(X_p) \Var(X_q)}.
\]
In particular, if the sequence is stationary with $\Var(X_p) = \Var(X_q) = \sigma^2$, then:
\[
|\Cov(X_p, X_q)| \leq \psi(|q-p|) \sigma^2.
\]
\end{proposition}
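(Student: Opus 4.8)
The plan is to deduce the covariance inequality from the defining event-level $\psi$-mixing estimate --- which in our dynamical setting is precisely \eqref{eq:psimixing} --- by the standard ascent from indicators to square-integrable functions, followed by a centering and a Jensen step. Assume without loss of generality that $p<q$ and set $n:=q-p=|q-p|$. Writing $\mathcal F^-_p:=\sigma(X_j:j\le p)$ and $\mathcal F^+_q:=\sigma(X_j:j\ge q)$, we have $X_p\in\mathcal F^-_p$ and $X_q\in\mathcal F^+_q$, two $\sigma$-algebras separated by a gap of exactly $n$, so the $\psi$-mixing bound applies with coefficient $\psi(n)$: for every $A\in\mathcal F^-_p$ and $B\in\mathcal F^+_q$ one has $|\gamma_\theta(A\cap B)-\gamma_\theta(A)\gamma_\theta(B)|\le\psi(n)\,\gamma_\theta(A)\gamma_\theta(B)$.

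The heart of the argument is the functional form of this estimate: if $f$ is $\mathcal F^-_p$-measurable and $g$ is $\mathcal F^+_q$-measurable, both integrable, then $|\E[fg]-\E[f]\E[g]|\le\psi(n)\,\E|f|\,\E|g|$. I would first verify this for simple functions $f=\sum_i a_i\mathbf{1}_{A_i}$ and $g=\sum_j b_j\mathbf{1}_{B_j}$ built on finite measurable partitions $\{A_i\}\subset\mathcal F^-_p$ and $\{B_j\}\subset\mathcal F^+_q$. Expanding the covariance as a double sum and applying the event-level bound termwise gives
\[
\begin{aligned}
\bigl|\E[fg]-\E[f]\E[g]\bigr|
&\le\sum_{i,j}|a_i|\,|b_j|\,\bigl|\gamma_\theta(A_i\cap B_j)-\gamma_\theta(A_i)\gamma_\theta(B_j)\bigr|\\
&\le\psi(n)\Bigl(\sum_i|a_i|\gamma_\theta(A_i)\Bigr)\Bigl(\sum_j|b_j|\gamma_\theta(B_j)\Bigr)=\psi(n)\,\E|f|\,\E|g|.
\end{aligned}
\]
Note that the estimate is insensitive to the signs of the coefficients $a_i,b_j$, since we pass to absolute values before invoking the mixing bound.

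Next I would extend the functional inequality from simple functions to general $f\in L^2(\mathcal F^-_p)$ and $g\in L^2(\mathcal F^+_q)$ by approximation: since $L^2$ of a sub-$\sigma$-algebra is the $L^2$-closure of the simple functions measurable with respect to that same $\sigma$-algebra, one can choose approximants $f_k,g_k$ that remain $\mathcal F^-_p$- and $\mathcal F^+_q$-measurable (for the digit application it suffices to take the truncations $\ell_k\mathbf{1}_{\{\ell_k\le M\}}$, which are finitely valued). One then passes to the limit on both sides. This limiting step is the one place requiring genuine care, and is the main obstacle: preserving measurability under approximation is automatic, but one must ensure joint convergence of the product term $\E[f_kg_k]\to\E[fg]$ as well as $\E|f_k|\to\E|f|$ and $\E|g_k|\to\E|g|$. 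All three follow from $L^2$-convergence --- the product term via Cauchy--Schwarz once both approximants converge in $L^2$, and the absolute-value terms from the resulting $L^1$-convergence --- so $L^2$ is exactly the natural setting in which the inequality propagates to the limit.

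Finally I would center and conclude. Put $\widetilde X_p:=X_p-\E[X_p]$ and $\widetilde X_q:=X_q-\E[X_q]$; these have the same measurability as $X_p,X_q$, satisfy $\E[\widetilde X_p]=\E[\widetilde X_q]=0$, and give $\Cov(X_p,X_q)=\E[\widetilde X_p\widetilde X_q]$. Applying the functional bound to $f=\widetilde X_p$ and $g=\widetilde X_q$ yields $|\Cov(X_p,X_q)|\le\psi(n)\,\E|\widetilde X_p|\,\E|\widetilde X_q|$. A final application of Jensen's inequality, $\E|\widetilde X_p|\le(\E\widetilde X_p^{\,2})^{1/2}=\sqrt{\Var(X_p)}$ and likewise for $q$, produces $|\Cov(X_p,X_q)|\le\psi(n)\sqrt{\Var(X_p)\Var(X_q)}$. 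The stationary specialization $\Var(X_p)=\Var(X_q)=\sigma^2$ is then immediate, giving $|\Cov(X_p,X_q)|\le\psi(n)\,\sigma^2$.
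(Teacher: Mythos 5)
Your proof is correct, but there is nothing in the paper to compare it against: the paper states this proposition as an imported result, citing Bradley \cite{Bradley2007}, and gives no proof of it. What you have supplied is essentially the standard argument that the citation stands in for, and it is the right one: the termwise estimate for simple functions (valid regardless of the signs of the coefficients, because the event-level bound \eqref{eq:psimixing} is two-sided), the $L^2$-approximation step with the product term controlled by Cauchy--Schwarz, and finally centering plus Jensen to turn $\E|\widetilde X_p|\,\E|\widetilde X_q|$ into $\sqrt{\Var(X_p)\Var(X_q)}$. Your write-up has two side benefits relative to the paper's bare citation: it makes the logical chain self-contained, and it makes visible that the only input needed is the mixing bound between $\sigma(X_j : j\le p)$ and $\sigma(X_j : j\ge q)$ with gap $q-p$, which is exactly what Proposition \ref{prop:mixing} provides in the paper's application, where $X_k$ is a function of the single digit $\ell_k$. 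Two minor remarks: your ``without loss of generality $p<q$'' silently excludes $p=q$, where the claim reads $\Var(X_p)\le\psi(0)\Var(X_p)$ and is either vacuous or requires the convention $\psi(0)\ge 1$ --- harmless, since the proposition is only ever invoked with $p\ne q$; and the functional inequality you prove in fact holds for integrable $f,g$ (not just square-integrable), so the $L^2$ setting is a convenience rather than a necessity, though it is precisely the generality the covariance statement needs.
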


Now we show that the centered truncated sum converges to zero in probability.

\begin{lemma}\label{lem:centered.truncated}
For the choice $N(n) = \lfloor n \log n \rfloor$ and $n$ sufficiently large so that $N(n) \geq m$, we have:
\[
\frac{S_{n,\theta}^{(N(n))}(x)}{n \log n} \xrightarrow{\gamma_\theta} \frac{1}{\log(1+\theta^2)}.
\]
\end{lemma}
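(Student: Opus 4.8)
The plan is to combine the already-established convergence of the normalized mean, $\E[S_{n,\theta}^{(N(n))}]/(n\log n) \to 1/\log(1+\theta^2)$, with a proof that the centered truncated sum is negligible in probability. Writing $X_k := \ell_k(x)\mathbf{1}_{\{\ell_k(x)\le N(n)\}}$ for the truncated digits (bounded by $N(n)$ and, for each fixed $n$, stationary in $k$ by $T_\theta$-invariance), it suffices by the elementary fact that a deterministic convergent sequence plus a term vanishing in probability converges in probability to show that
\[
\frac{S_{n,\theta}^{(N(n))}(x) - \E[S_{n,\theta}^{(N(n))}]}{n\log n} \xrightarrow{\gamma_\theta} 0.
\]
First I would apply Chebyshev's inequality, so that for each $\varepsilon>0$,
\[
\gamma_\theta\!\left(\left|\frac{S_{n,\theta}^{(N(n))} - \E[S_{n,\theta}^{(N(n))}]}{n\log n}\right| > \varepsilon\right) \le \frac{\Var\!\left(S_{n,\theta}^{(N(n))}\right)}{\varepsilon^2 (n\log n)^2},
\]
reducing the entire statement to a sufficiently sharp upper bound on the variance of the truncated sum.

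For the variance I would expand $\Var(S_{n,\theta}^{(N(n))}) = \sum_{k=1}^n \Var(X_k) + 2\sum_{1\le p<q\le n}\Cov(X_p,X_q)$. By stationarity each $\Var(X_k)$ equals a common value $\sigma_N^2$, and the $\psi$-mixing covariance inequality of Proposition \ref{prop:covariance-bound} gives $|\Cov(X_p,X_q)| \le \psi(|q-p|)\sigma_N^2$. Summing the off-diagonal terms, $\sum_{1\le p<q\le n}\psi(|q-p|) = \sum_{d=1}^{n-1}(n-d)\psi(d) \le n\sum_{d=1}^\infty \psi(d)$, and the final series is finite by Proposition \ref{prop:mixing}. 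Hence $\Var(S_{n,\theta}^{(N(n))}) \le C\, n\, \sigma_N^2$ for the absolute constant $C = 1 + 2\sum_{d\ge 1} \psi(d)$.

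It then remains to estimate $\sigma_N^2 \le \E[(\ell^{(N)})^2] = \sum_{i=m}^{N} i^2\,\gamma_\theta(I_\theta(i))$. Using the asymptotic $\gamma_\theta(I_\theta(i)) \sim (1/\log(1+\theta^2))\, i^{-2}$ (the same computation as in Remark \ref{rem.1.6} and Lemma \ref{lem.3.1}), the weights $i^2$ and $i^{-2}$ cancel, leaving $\E[(\ell^{(N)})^2] \sim N/\log(1+\theta^2) = O(N(n)) = O(n\log n)$. Substituting back yields $\Var(S_{n,\theta}^{(N(n))}) = O(n^2\log n)$, so the Chebyshev bound above is $O\!\left(1/\log n\right) \to 0$, which finishes the argument.

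The point requiring the most care — and the main obstacle — is that the truncation level $N(n)$ depends on $n$, so $\{X_k\}$ is genuinely a triangular array rather than one fixed stationary sequence. What rescues the argument is that the mixing coefficients in Proposition \ref{prop:mixing} are a property of the underlying $\sigma$-algebras $\sigma(\ell_1,\dots,\ell_k)$ and $\sigma(\ell_{k+n},\dots)$; since each $X_k$ is a measurable function of $\ell_k$ alone, the covariance bound applies with the same $\psi(n)$ uniformly in the truncation level. The quantitative heart of the matter is the merely linear growth $\E[(\ell^{(N)})^2] = O(N)$: this is precisely what keeps the variance at order $n^2\log n$ and forces the Chebyshev ratio to vanish, whereas any faster growth of the second moment would destroy the estimate.
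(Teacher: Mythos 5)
Your proposal is correct and follows essentially the same route as the paper: Chebyshev's inequality applied after bounding $\Var\bigl(S_{n,\theta}^{(N(n))}\bigr)$ via the second-moment estimate $\E[(\ell^{(N(n))})^2] = O(N(n)) = O(n\log n)$ together with the $\psi$-mixing covariance inequality of Proposition \ref{prop:covariance-bound}, yielding $O(n^2\log n) = o\bigl(n^2(\log n)^2\bigr)$ and then combining with the previously established convergence of the normalized mean. Your explicit remark that the truncation level's dependence on $n$ makes $\{X_k\}$ a triangular array, and that the $\psi$-mixing bound nevertheless applies uniformly because it is a property of the $\sigma$-algebras rather than of the observables, is a point the paper passes over silently and is a worthwhile clarification.
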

\begin{proof}
We establish that $\Var(S_{n,\theta}^{(N(n))}) = o(n^2 (\log n)^2)$.
First, bound the variance of individual terms. We have:
\begin{align*}
\E[(\ell^{(N(n))})^2] &= \sum_{i=m}^{N(n)} i^2 \gamma_\theta(\ell_1 = i) \leq \frac{1}{\log(1+\theta^2)} \sum_{i=m}^{N(n)} i^2 \cdot \frac{1}{i^2} \\
&= \frac{1}{\log(1+\theta^2)} \sum_{i=m}^{N(n)} 1 = \frac{1}{\log(1+\theta^2)} (N(n) - m + 1) \\
&\sim \frac{1}{\log(1+\theta^2)} n \log n.
\end{align*}
Hence:
\[
\Var(\ell^{(N(n))}) \leq \E[(\ell^{(N(n))})^2] \ll n \log n.
\]
For the covariance terms, we use the $\psi$-mixing property, consider the stationary sequence $X_k = \ell^{(N(n))} \circ T_\theta^{k-1}$. By Proposition \ref{prop:covariance-bound} and the $\psi$-mixing property (Proposition \ref{prop:mixing}), we have:
\[
|\Cov(X_i, X_j)| \leq \psi(|j-i|) \Var(\ell^{(N(n))}) \ll \psi(|j-i|) \cdot n \log n.
\]
Therefore, the total variance satisfies:
\begin{align*}
\Var(S_{n,\theta}^{(N(n))}) &= \sum_{i=1}^n \Var(X_i) + 2\sum_{1 \leq i < j \leq n} \Cov(X_i, X_j) \\
&\ll n \cdot (n \log n) + 2\sum_{i=1}^n \sum_{r=1}^{n-i} \psi(r) \cdot n \log n.
\end{align*}
Since $\sum_{r=1}^\infty \psi(r) < \infty$ by Proposition \ref{prop:mixing}, we have:
\[
\sum_{i=1}^n \sum_{r=1}^{n-i} \psi(r) \leq n \sum_{r=1}^\infty \psi(r) \ll n.
\]
Thus:
\[
\Var(S_{n,\theta}^{(N(n))}) \ll n^2 \log n + 2 n^2 \log n = O(n^2 \log n) = o(n^2 (\log n)^2).
\]
By Chebyshev's inequality, for any $\varepsilon > 0$:
\[
\gamma_\theta\left( \left| \frac{S_{n,\theta}^{(N(n))} - \E[S_{n,\theta}^{(N(n))}]}{n \log n} \right| > \varepsilon \right) \leq \frac{\Var(S_{n,\theta}^{(N(n))})}{\varepsilon^2 n^2 (\log n)^2} \to 0.
\]
Hence:
\[
\frac{S_{n,\theta}^{(N(n))}(x)}{n \log n} \xrightarrow{\gamma_\theta} \frac{1}{\log(1+\theta^2)}.
\]

\end{proof}
In conclusion, from Lemma \ref{lem:remainder-negligible} and Lemma \ref{lem:centered.truncated} we have: 
\[
\frac{S_{n,\theta}(x)}{n \log n} = \frac{S_{n,\theta}^{(N(n))}(x)}{n \log n} + \frac{R_n^{(N(n))}(x)}{n \log n} \xrightarrow{\gamma_\theta} \frac{1}{\log(1+\theta^2)} + 0 = \frac{1}{\log(1+\theta^2)}.
\]
This completes the proof of Theorem \ref{thm:khinchine}.
\qed

\section{Proof of Theorem \ref{thm:diamondvaaler} (Diamond--Vaaler for \(\theta\)-expansions)}
\label{sec:proof-diamondvaaler}
 
Theorem \ref{thm:diamondvaaler} states that for $\gamma_\theta$-almost every $x$,
\[
\lim_{n \to \infty} \frac{S_{n,\theta}(x) - L_{n,\theta}(x)}{n \log n} = \frac{1}{\log(1+\theta^2)}.
\]
From Theorem \ref{thm:khinchine}, we already have the convergence in measure:
\[
\frac{S_{n,\theta}}{n \log n} \xrightarrow{\gamma_\theta} \frac{1}{\log(1+\theta^2)}.
\]
Therefore, to prove the almost sure convergence of the centered sum, it suffices to show that the contribution from the maximal digit is asymptotically negligible, i.e.,
\begin{equation}\label{eq:max-negligible}
\frac{L_{n,\theta}}{n \log n} \xrightarrow{\gamma_\theta} 0.
\end{equation}
The following lemma establishes this result.
\begin{lemma}\label{lem:max-negligible}
For every $\varepsilon > 0$,
\[
\lim_{n \to \infty} \gamma_\theta\left( \frac{L_{n,\theta}}{n \log n} > \varepsilon \right) = 0.
\]
Equivalently, $\frac{L_{n,\theta}}{n \log n} \xrightarrow{\gamma_\theta} 0$.
\end{lemma}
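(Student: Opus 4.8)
The plan is to prove this by a first-moment (union bound) argument, mirroring the structure already used for the remainder term in Lemma~\ref{lem:remainder-negligible} and relying entirely on the exact tail asymptotics from Lemma~\ref{lema.2.1}. The maximal digit exceeds a given threshold if and only if at least one of the $n$ digits does, so for a fixed $\varepsilon > 0$ I would first record the set identity
\[
\left\{ \frac{L_{n,\theta}(x)}{n\log n} > \varepsilon \right\}
= \bigcup_{k=1}^n \{\ell_k(x) > \varepsilon\, n\log n\}.
\]

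Next I would apply the union bound together with the $T_\theta$-invariance of $\gamma_\theta$, which guarantees that each $\ell_k$ has the same tail distribution as $\ell_1$. This yields
\[
\gamma_\theta\!\left( \frac{L_{n,\theta}}{n\log n} > \varepsilon \right)
\leq \sum_{k=1}^n \gamma_\theta(\ell_k > \varepsilon\, n\log n)
= n\cdot \gamma_\theta(\ell_1 > \varepsilon\, n\log n).
\]
Substituting the tail estimate of Lemma~\ref{lema.2.1}, namely $\gamma_\theta(\ell_1 \ge k) \sim \frac{1}{\log(1+\theta^2)}\cdot\frac{1}{k}$ as $k\to\infty$, with threshold $k = \lceil \varepsilon\, n\log n\rceil$, I would obtain
\[
n\cdot \gamma_\theta(\ell_1 > \varepsilon\, n\log n)
\sim \frac{1}{\log(1+\theta^2)}\cdot \frac{n}{\varepsilon\, n\log n}
= \frac{1}{\varepsilon\,\log(1+\theta^2)}\cdot\frac{1}{\log n} \longrightarrow 0.
\]
Since this holds for every $\varepsilon > 0$, convergence in $\gamma_\theta$-probability follows, establishing both \eqref{eq:max-negligible} and the lemma.

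I do not expect a genuine obstacle here; the argument is routine. The one point worth flagging is \emph{why} the crude union bound suffices: the tail decays only like $1/k$, so the factor of $n$ introduced by summing over the $n$ digits is potentially dangerous. It is neutralized precisely because the normalizing threshold grows as $n\log n$ rather than merely as $n$, making the ratio $n/(n\log n) = 1/\log n$ vanish. Thus no refined control of the joint law of the digits (and in particular none of the $\psi$-mixing machinery of Proposition~\ref{prop:mixing}) is needed for this step; the interplay between the number of terms and the super-linear scale $n\log n$ is the whole content of the estimate.
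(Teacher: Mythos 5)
Your proposal is correct and follows essentially the same route as the paper: the set identity for the maximum, the union bound combined with stationarity, and the tail estimate of Lemma~\ref{lema.2.1}, with the decisive factor $n/(n\log n)=1/\log n$ giving the convergence. The only cosmetic difference is that the paper works with the exact tail formula and elementary inequalities ($\log(1+x)\le x$, $\lfloor \varepsilon n\log n\rfloor+1\ge \varepsilon n\log n$) where you invoke the asymptotic $\sim 1/k$ form, which is equally valid since the threshold tends to infinity.
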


\begin{proof}
The event that the normalized maximum exceeds $\varepsilon$ is:
\[
\left\{ \frac{L_{n,\theta}}{n \log n} > \varepsilon \right\} = \left\{ L_{n,\theta} > \varepsilon n \log n \right\} = \bigcup_{k=1}^{n} \left\{ \ell_k > \varepsilon n \log n \right\}.
\]
Since $\ell_k$ takes integer values, we have:
\[
\left\{ \ell_k > \varepsilon n \log n \right\} \subset \left\{ \ell_k \ge \lfloor \varepsilon n \log n \rfloor + 1 \right\}.
\]
By the stationarity of the sequence ${\ell_k}$ under $\gamma_\theta$ and the union bound:
\[
\gamma_\theta\left( L_{n,\theta} > \varepsilon n \log n \right) \leq \sum_{k=1}^{n} \gamma_\theta\left( \ell_k \ge \lfloor \varepsilon n \log n \rfloor + 1 \right) = n \cdot \gamma_\theta\left( \ell_1 \ge \lfloor \varepsilon n \log n \rfloor + 1 \right).
\]
Using the exact tail probability from Lemma~\ref{lema.2.1} with $k = \lfloor \varepsilon n \log n \rfloor + 1$:
\[
\gamma_\theta\left( \ell_1 \ge \lfloor \varepsilon n \log n \rfloor + 1 \right) = \frac{1}{\log(1+\theta^2)} \log\!\left(1 + \frac{1}{\lfloor \varepsilon n \log n \rfloor + 1}\right).
\]
Note that for $x > 0$, we have $\log(1+x) \leq x$. Therefore:
\[
\log\!\left(1 + \frac{1}{\lfloor \varepsilon n \log n \rfloor + 1}\right) \leq \frac{1}{\lfloor \varepsilon n \log n \rfloor + 1}.
\]
Since $\lfloor \varepsilon n \log n \rfloor + 1 \geq \varepsilon n \log n$ for all $n \geq 1$, we get:
\[
\frac{1}{\lfloor \varepsilon n \log n \rfloor + 1} \leq \frac{1}{\varepsilon n \log n}.
\]
Combining these inequalities:
\[
\gamma_\theta\left( \ell_1 \ge \lfloor \varepsilon n \log n \rfloor + 1 \right) \leq \frac{1}{\log(1+\theta^2)} \cdot \frac{1}{\varepsilon n \log n}.
\]
Therefore:
\[
\gamma_\theta\left( L_{n,\theta} > \varepsilon n \log n \right) \leq n \cdot \frac{1}{\log(1+\theta^2)} \cdot \frac{1}{\varepsilon n \log n} = \frac{1}{\log(1+\theta^2)} \frac{1}{\varepsilon \log n}.
\]
The right-hand side tends to zero as $n \to \infty$ for any fixed $\varepsilon > 0$. This proves that $L_{n,\theta}/(n \log n)$ converges to zero in $\gamma_\theta$-probability.
\end{proof}
\section{Proof of Theorem \ref{thm:philipp} (Failure of the Strong Law)} \label{sect.philipp}
This section is devoted to the proof of Theorem \ref{thm:philipp}, which demonstrates the failure of the strong law for the full sum $S_{n,\theta}$ for any regular norming sequence $\{a(n)\}$. The proof leverages the $\psi$-mixing property (Proposition \ref{prop:mixing}) to handle the dependencies between digits. The argument naturally splits into two cases, dictated by the convergence or divergence of the series $\sum_{n=1}^\infty 1/a(n)$.

Throughout the proofs we tacitly assume $n$ and $N$ are large enough so that
the truncation levels $N(n)$ or $a(n)$ exceed $m$; the finitely many small-$n$
cases are trivial and do not affect the asymptotic estimates.

\subsection{Case 1: $\sum_{n=1}^\infty \frac{1}{a(n)} = \infty$}

In this case, we show that $\limsup_{n \to \infty} S_{n,\theta}(x) / a(n) = \infty$ for $\gamma_\theta$-almost every $x$.

Let $M > 1$ be an arbitrary positive constant. We will show that the sum exceeds $M \cdot a(n)$ infinitely often.
Define the sequence of events
\[
A_n = \left\{ x \in [0, \theta] : \ell_n(x) \geq M \cdot a(n) \right\}.
\]
By the stationarity of the sequence $\{\ell_n\}$ and the tail estimate from Lemma 2.1, there exists a constant $C > 0$ (depending only on $\theta$) such that for all sufficiently large $n$,
\[
\gamma_\theta(A_n) = \gamma_\theta(\ell_1 \geq M \cdot a(n)) \geq \frac{C}{M \cdot a(n)}.
\]
Since $\sum_{n=1}^\infty 1/a(n) = \infty$ by assumption, it follows that
\[
\sum_{n=1}^{\infty} \gamma_\theta(A_n) = \infty.
\]

We apply the Borel-Cantelli lemma for $\psi$-mixing sequences (see \cite[Vol.~2, Thm.~1.5.5]{Bradley2007} for the precise statement under summable $\psi$-coefficients). 
By Proposition \ref{prop:mixing}, the system is $\psi$-mixing with summable coefficients $\sum \psi(n) < \infty$. A standard result \cite[Section 1.5]{Bradley2007} states that for such sequences, if $\sum \gamma_\theta(A_n) = \infty$, then $\gamma_\theta(\limsup A_n) > 0$. Since the event $\limsup A_n$ is a tail event and the dynamical system is ergodic, it must have full measure:
\[
\gamma_\theta(\limsup A_n) = 1.
\]
Therefore, for $\gamma_\theta$-almost every $x$, $x \in A_n$ for infinitely many $n$. For any such $x$ and for any $n$ where $\ell_n(x) \geq M \cdot a(n)$, we have
\[
\frac{S_{n,\theta}(x)}{a(n)} \geq \frac{\ell_n(x)}{a(n)} \geq M.
\]
Hence,
\[
\limsup_{n \to \infty} \frac{S_{n,\theta}(x)}{a(n)} \geq M.
\]
Since $M > 1$ was arbitrary, we conclude that
\[
\limsup_{n \to \infty} \frac{S_{n,\theta}(x)}{a(n)} = \infty \quad \text{for } \gamma_\theta\text{-a.e. } x.
\]

\subsection{Case 2: $\sum_{n=1}^\infty \frac{1}{a(n)} < \infty$}
In this case, we show that $\displaystyle\lim_{n \to \infty} S_{n,\theta}(x) / a(n) = 0$ for $\gamma_\theta$-almost every $x$.

The strategy is to truncate the digits at the level $a(n)$ and show that the contribution from the large digits is negligible almost surely, while the remaining truncated sum behaves like a well-behaved process whose growth is slower than $a(n)$.

Define the \emph{truncated digit} at level $a(n)$:
\[
\ell_n^*(x) = \ell_n(x) \cdot \mathbf{1}_{\{ \ell_n(x) \leq a(n) \}}.
\]
Let $S_{N}^* = \displaystyle \sum_{n=1}^N \ell_n^*$ be the sum of the truncated digits.

Consider the event that the $n$-th digit exceeds the truncation level:
$$B_n = \{ \ell_n > a(n) \}.$$ 
By stationarity and Lemma 2.1, there exists a constant $D > 0$ such that for all large $n$,
\[
\gamma_\theta(B_n) = \gamma_\theta(\ell_1 > a(n)) \leq \frac{D}{a(n)}.
\]
Since $\sum_{n=1}^\infty 1/a(n) < \infty$ by assumption, it follows that $\sum_{n=1}^\infty \gamma_\theta(B_n) < \infty$.
 By the standard Borel--Cantelli lemma,
\[
\gamma_\theta( \limsup B_n ) = 0.
\]
This means that for $\gamma_\theta$-almost every $x$, the inequality $\ell_n(x) > a(n)$ holds for only finitely many $n$. Consequently, for almost every $x$, there exists some $N_0(x)$ such that for all $n \geq N_0(x)$,
\[
\ell_n(x) = \ell_n^*(x).
\]
Therefore, for $n \geq N_0(x)$,
\[
S_{n,\theta}(x) = S_n^*(x) + R(x),
\]
where $R(x) = \sum_{k=1}^{N_0(x)-1} (\ell_k(x) - \ell_k^*(x))$ is finite for each $x$. Since $a(n) \to \infty$, we have $\frac{R(x)}{a(n)} \to 0$. Thus, to prove that $S_{n,\theta}(x)/a(n) \to 0$ almost surely, it suffices to prove that $S_n^*(x)/a(n) \to 0$ almost surely.

We will establish this by proving the following two facts:
\begin{equation}
\frac{1}{a(n)} \sum_{n=1}^N \mathbb{E}[\ell_n^*] \to 0,  \label{eq:mean-term} 
\end{equation}
{and} 
\begin{equation}
\frac{1}{a(n)} \sum_{n=1}^N \left( \ell_n^* - \mathbb{E}[\ell_n^*] \right) \to 0 \quad \text{$\gamma_\theta$-a.s.} \label{eq:centered-term}
\end{equation}
By stationarity, $\mathbb{E}[\ell_n^*] = \mathbb{E}[\ell_1^*]$ for all $n$. From Lemma \ref{lema.2.1} and the definition of $\ell_n^*$, we have the asymptotic:
\[
\mathbb{E}[\ell_n^*] = \sum_{k=m}^{\lfloor a(n) \rfloor} k \cdot \gamma_\theta(\ell_1 = k) \ll \sum_{k=m}^{\lfloor a(n) \rfloor} \frac{1}{k} \ll \log a(n).
\]
The assumption that $a(n)/n$ is non-decreasing ensures the sequence $a(n)$ is regular and grows at least linearly. Therefore,
\[
\frac{1}{a(N)} \sum_{n=1}^N \mathbb{E}[\ell_n^*] \ll \frac{1}{a(N)} \sum_{n=1}^N \log a(n).
\]

For typical regular sequences like $a(n) = n \log n$, the right-hand side is asymptotic to $(\log \log N) / (\log N) \to 0$. In general, the condition $\sum 1/a(n) < \infty$ forces $a(n)$ to grow faster than $n$, ensuring the average of $\log a(n)$ grows slower than $a(n)$. Thus, the term in (\ref{eq:mean-term}) converges to zero.

Let $X_n = \ell_n^* - \mathbb{E}[\ell_n^*]$. To show that $\displaystyle \frac{1}{a(N)} \sum_{n=1}^N X_n \to 0$ almost surely, we use Kronecker's lemma: if $\displaystyle\sum_{n=1}^\infty \frac{X_n}{a(n)}$ converges almost surely, and $a(n) \to \infty$ is positive and non-decreasing, then $\displaystyle\frac{1}{a(N)} \sum_{n=1}^N X_n \to 0$ almost surely.

We will prove the $L^2$ convergence of the series $\displaystyle\sum_{n=1}^\infty \frac{X_n}{a(n)}$, which implies almost sure convergence. Since the $X_n$ are centered, it suffices to show that the sum of variances and covariances is finite:
\begin{equation}
\sum_{n=1}^\infty \frac{\Var(\ell_n^*)}{(a(n))^2} + 2 \sum_{1 \leq p < n} \frac{|\Cov(\ell_p^*, \ell_n^*)|}{a(p) a(n)} < \infty. \label{5.3}
\end{equation}
We bound the variance by the second moment:
\[
\Var(\ell_n^*) \leq \mathbb{E}[(\ell_n^*)^2] = \sum_{k=m}^{\lfloor a(n) \rfloor} k^2 \cdot \gamma_\theta(\ell_1 = k) \ll \sum_{k=m}^{\lfloor a(n) \rfloor} 1 = \lfloor a(n) \rfloor - m.
\]

By the standard covariance inequality for $\psi$-mixing sequences (see \cite[Vol.~2, Thm.~2.1.5]{Bradley2007}), one has for $n>p$,
\[
|\Cov(\ell_p^*,\ell_n^*)|\le \psi(n-p)\sqrt{\Var(\ell_p^*)\Var(\ell_n^*)}
\ll \psi(n-p)\sqrt{a(p)a(n)}.
\]
Hence
\[
\sum_{1\le p<n}\frac{|\Cov(\ell_p^*,\ell_n^*)|}{a(p)a(n)}
\ll\sum_{p=1}^\infty\sum_{k=1}^\infty
\frac{\psi(k)\sqrt{a(p)a(p+k)}}{a(p)a(p+k)}
=\sum_{p=1}^\infty\sum_{k=1}^\infty\frac{\psi(k)}{\sqrt{a(p)a(p+k)}}.
\]
Since $a(n)/n$ is nondecreasing we have $a(p+k)\ge a(p)$ for $k\ge1$, and therefore
\[
\frac{1}{\sqrt{a(p)a(p+k)}}\le\frac{1}{a(p)},\qquad
\frac{1}{\sqrt{a(p)a(p+k)}}\le\frac{1}{(a(p))^{3/2}}.
\]
Using these bounds and summability of $\psi(k)$ we obtain either
\[
\sum_{1\le p<n}\frac{|\Cov(\ell_p^*,\ell_n^*)|}{a(p)a(n)}
\ll\Big(\sum_{k=1}^\infty\psi(k)\Big)\sum_{p=1}^\infty\frac{1}{a(p)}
\]
or the slightly stronger
\[
\sum_{1\le p<n}\frac{|\Cov(\ell_p^*,\ell_n^*)|}{a(p)a(n)}
\ll\Big(\sum_{k=1}^\infty\psi(k)\Big)\sum_{p=1}^\infty\frac{1}{(a(p))^{3/2}}.
\]
Under the hypothesis $\sum_{p \ge 1}1/a(p)<\infty$ the first display already gives convergence; the second is a stronger (but also valid) bound.

Therefore, the condition (\ref{5.3}) is satisfied, and the series $\displaystyle \sum_{n=1}^\infty \frac{X_n}{a(n)}$ converges in $L^2$ and hence almost surely. 
By Kronecker's lemma, we conclude that
\[
\frac{1}{a(N)} \sum_{n=1}^N X_n \to 0 \quad \text{$\gamma_\theta$-a.s.},
\]
which proves (\ref{eq:centered-term}).

Combining (\ref{eq:mean-term}) and (\ref{eq:centered-term}), we have
\[
\frac{S_N^*}{a(N)} = \frac{1}{a(N)} \sum_{n=1}^N X_n + \frac{1}{a(N)} \sum_{n=1}^N \mathbb{E}[\ell_n^*] \to 0 \quad \text{$\gamma_\theta$-a.s.}
\]
As argued earlier, this implies that $S_{n,\theta}(x)/a(n) \to 0$ for $\gamma_\theta$-almost every $x$, which completes the proof of Theorem \ref{thm:philipp}. \qed

\end{document}